\let\today\relax
\def\ps@pprintTitle{%
    \let\@oddhead\@empty
    \let\@evenhead\@empty
    \def\@oddfoot{\footnotesize\itshape
         {Preprint submitted to arXiv} \hfill\today}%
    \let\@evenfoot\@oddfoot
    }
\definecolor{cornell-red}{RGB}{179,27,27}
\newcites{ec}{References}
\def\th@remark{%
  \thm@headfont{\bfseries}%
  \normalfont 
  \thm@preskip\topsep \divide\thm@preskip\tw@
  \thm@postskip\thm@preskip
}
\newtheorem{thm}{Theorem}
\newtheorem{prop}[thm]{Proposition}
\newtheorem{cor}[thm]{Corollary}
\theoremstyle{definition}
\newtheorem{example}{Example}
\theoremstyle{remark}
\newtheorem{rem}{Remark}
\DeclareMathOperator*{\argmax}{arg\,max}
\newcommand{\tp}{\top}
\newcommand{\epst}{\widetilde{\varepsilon}}
\newcommand{\E}{\mathbb{E}}
\newcommand{\Prob}{\mathbb{P}}
\newcommand{\N}{\mathbb{N}}
\newcommand{\Q}{\mathbb{Q}}
\newcommand{\R}{\mathbb{R}}
\newcommand{\calB}{\mathcal{B}}
\newcommand{\calD}{\mathcal{D}}
\newcommand{\calP}{\mathcal{P}}
\newcommand{\calS}{\mathcal{S}}
\newcommand{\calX}{\mathcal{X}}
\newcommand{\calY}{\mathcal{Y}}
\newcommand{\xb}{\pmb{x}}
\newcommand{\yb}{\pmb{y}}
\newcommand{\cb}{\pmb{c}}
\newcommand{\db}{\pmb{d}}
\newcommand{\hb}{\pmb{h}}
\newcommand{\qb}{\pmb{q}}
\newcommand{\xib}{\pmb{\xi}}
\newcommand{\Cb}{\pmb{C}}
\newcommand{\Tb}{\pmb{T}}
\newcommand{\Wb}{\pmb{W}}
\newcommand{\zb}{\pmb{z}}
\newcommand{\mub}{\pmb{\mu}}
\newcommand{\rhob}{\pmb{\rho}}
\newcommand{\lambdab}{\pmb{\lambda}}
\newcommand{\sigmab}{\pmb{\sigma}}
\newcommand{\Lbar}{\overline{L}}
\newcommand{\Ubar}{\overline{U}}
\newcommand{\calSt}{\widetilde{\mathcal{S}}}
\newcommand{\cf}{c^{\tiny{\textup{f}}}}
\newcommand{\cv}{c^{\tiny{\textup{v}}}}
\newcommand{\bb}{\pmb{b}}
\newcommand{\wb}{\pmb{w}}
\newcommand{\zetab}{\pmb{\zeta}}
\newcommand{\pib}{\pmb{\pi}}
\newcommand{\etab}{\pmb{\eta}}
\newcommand{\vphib}{\pmb{\varphi}}
\newcommand{\dlb}{\underline{d}}
\newcommand{\dub}{\overline{d}}
\newcommand{\Deltalb}{\underline{\Delta}}
\newcommand{\Deltaub}{\overline{\Delta}}
    \titlespacing{\section}{0pt}{0ex}{0ex}
    \titlespacing{\subsection}{0pt}{0ex}{0ex}
    \titlespacing{\subsubsection}{0pt}{0ex}{0ex}
    \newcommand{\blue}{\textcolor{black}}  
\begin{document}

\setlength{\abovedisplayskip}{4pt}%
\setlength{\belowdisplayskip}{4pt}%
\setlength{\abovedisplayshortskip}{4pt}%
\setlength{\belowdisplayshortskip}{4pt}%

\begin{frontmatter}

\title{An inexact column-and-constraint generation method to solve two-stage robust optimization problems}

\author[mymainaddress1]{Man Yiu Tsang}
\cortext[cor1]{Corresponding author. }
\ead{mat420@lehigh.edu}
\author[mymainaddress1]{Karmel S.~Shehadeh\corref{cor1}}
\ead{kas720@lehigh.edu}
\author[mymainaddress1]{Frank E.~Curtis}
\ead{frank.e.curtis@lehigh.edu}

\address[mymainaddress1]{Department of Industrial and Systems Engineering, Lehigh University, Bethlehem, PA,  USA}

\begin{abstract}  
\noindent We propose a new inexact column-and-constraint generation (i-C\&CG) method to solve two-stage robust optimization problems.  The method allows solutions to the master problems to be inexact, which is desirable when solving large-scale and/or challenging problems.  It is equipped with a backtracking routine that controls the trade-off between bound improvement and inexactness.  Importantly, this routine allows us to derive theoretical finite convergence guarantees for our i-C\&CG method.  Numerical experiments demonstrate computational advantages of our i-C\&CG method over state-of-the-art column-and-constraint generation methods.
\begin{keyword} 
Two-stage stochastic optimization, robust optimization, column-and-constraint generation,  \blue{decomposition algorithms}
\end{keyword}
\end{abstract}
\end{frontmatter}

\section{Introduction}

Robust optimization (RO) is a methodology for formulating optimization problems in which some parameters are uncertain, but belong to a given \textit{uncertainty set}. In RO, one optimizes a system by hedging against the worst-case scenario of uncertain parameters within the predefined uncertainty set. For example, two-stage RO models are employed when some decisions are made before the uncertainty is revealed (i.e., first-stage problem) and some are made after the uncertainty is realized (i.e., second-stage problem). Two-stage RO models have received substantial attention in various application domains because of their ability to provide solutions that are robust to perturbations within the uncertainty set \citep{An_et_al:2014, Neyshabouri_Berg:2017, Zhang_Liu:2022}. We refer readers to \cite{Gabrel_et_al:2014} for a recent survey.

Various solution methods have been proposed to obtain exact solutions to two-stage RO models under the master-subproblem framework. In this framework, a master problem and a subproblem are solved alternately. The master problem, as a relaxation of the RO model, provides a lower bound to the true optimal value, whereas the subproblem provides an upper bound. The algorithm terminates when the relative gap between the lower and upper bounds is less than a prescribed tolerance. One popular method is based on Benders' decomposition (BD), which constructs lower approximations of the objective function from a dual perspective, i.e., via second-stage dual variables \citep{Jiang_et_al:2012,Thiele_et_al:2009}. Another solution method is the column-and-constraint generation (C\&CG) method proposed in~\cite{Zeng_Zhao:2013}. Different from BD, C\&CG constructs lower approximations of the objective function from a primal perspective, i.e., via second-stage variables and constraints. Results from \cite{Zeng_Zhao:2013} demonstrate the computational efficiency of C\&CG over BD. Hence, C\&CG has been employed widely to solve RO problems in many application domains \citep{An_et_al:2014, Du_et_al:2020, Ruiz_Conejo:2015, Zugno_Conejo:2015}.

Recent research has been devoted to addressing the computational challenges arising from solving the subproblems within C\&CG and exploring relaxations of the assumptions adopted in \cite{Zeng_Zhao:2013}. Relaxing the relatively complete recourse assumption, Ayoub and Poss \cite{Ayoub_Poss:2016} derived an alternative mixed integer program (MIP) reformulation of the subproblem under a 0-1 polytope uncertainty set. In a similar line of research, Bertsimas and Shtern \cite{Bertsimas_Shtern:2018} developed a feasibility oracle and extended the convergence results in \cite{Zeng_Zhao:2013} from polyhedral to general compact uncertainty sets.

In various RO problems, the master problem is a large-scale (mixed) integer program.  For this or other potential reasons, solving the master problem to optimality in each iteration can be challenging.  However, discussions on computational issues associated with the master problem in the literature are sparse. The recent article  \cite{Tonissen_et_al:2021} proposed an adaptive relative tolerance scheme when solving the master problem. Although computational results from \cite{Tonissen_et_al:2021} suggest an improvement in solution time by adopting their proposed scheme, the work did not provide theoretical guarantees concerning the accuracy and convergence of the proposed modification of C\&CG.

In this paper, we propose a new inexact C\&CG method for solving general two-stage RO problems. In our i-C\&CG method, the master problems only need to be solved to a prescribed relative optimality gap or time limit, which may be the only tractable option when solving large-scale and/or challenging problems in practice.  Our method involves a backtracking routine that controls the trade-off between bound improvement and inexactness. We derive theoretical guarantees and prove finite convergence of our approach, \blue{demonstrating that our i-C\&CG method converges to the exact optimal solution under some parameter settings}. Numerical experiments on a scheduling problem and a \blue{facility location} problem demonstrate the computational advantages of our i-C\&CG method over a state-of-the-art C\&CG method.

The remainder of the paper is organized as follows. In Section \ref{sec:problem}, we present the general two-stage RO problem.  In Section \ref{sec:exact_C&CG}, we discuss the C\&CG method, then, in Section \ref{sec:inexact_C&CG}, we introduce our proposed i-C\&CG method and present its theoretical properties. Finally, we conduct computational experiments on an operating room scheduling problem in Section \ref{sec:num_expt}.

\section{Two-stage RO Problem and Assumptions} \label{sec:problem}

We consider a two-stage stochastic problem using $\xb\in\R^n$ to denote first-stage variables, $\calX$ to denote the first-stage feasible region, and $\yb\in\R^m$ to denote second-stage variables. These variables can be either continuous or discrete.  Let $\xib$ be a random vector defined on the measurable space $(\R^l, \calB)$, where $\calB$ is the Borel $\sigma$-field. Our problem of interest is the two-stage robust linear optimization problem
\allowdisplaybreaks

\begin{equation} \label{prob:2S_robust}
    \upsilon^\star = \min_{\xb\in\calX}\,\,\bigg\{ \cb^\tp \xb + \max_{\xib\in\Xi} \, \min_{\yb\in\calY(\xb,\xib)} \qb^\tp \yb \bigg\},
\end{equation}
where $\Xi\subseteq\R^l$ is an uncertainty set and $\calY(\xb,\xib)=\{\yb\in\R^m_+ \mid \Tb\xb +  \Wb\yb + \Cb\xib \geq \hb\}$ is the second-stage feasibility set. The parameters $\cb\in\R^n$, $\qb\in\R^m$, $\hb\in\R^r$, $\Tb\in\R^{r\times n}$, $\Wb\in\R^{r\times m}$, and $\Cb\in\R^{r\times l}$ are assumed to be known. For notational convenience, we use $Q(\xb,\xib)$ to denote the value of the second-stage (recourse) problem for a given $(\xb,\xib)\in\calX\times\Xi$, i.e., $Q(\xb,\xib)=\min_{\yb\in\calY(\xb,\xib)}\qb^\tp\yb$.

We make a few standard assumptions on problem \eqref{prob:2S_robust}. First, as in \cite{Zeng_Zhao:2013}, we assume that the uncertainty set $\Xi$ is either a finite set or a bounded polyhedron (i.e., a polytope). This is a mild assumption that is satisfied by various popular uncertainty sets (e.g., the budgeted uncertainty set~\cite{Bertsimas_Sim:2004}). Second, we assume that problem \eqref{prob:2S_robust} has relatively complete recourse, i.e., for any $\xb\in\calX$ and $\xib\in\Xi$, the set $\calY(\xb,\xib)$ is non-empty. Third, we assume problem \eqref{prob:2S_robust} has an optimal solution with finite objective value, which holds trivially when $\calX$ is compact. Various applications, such as facility location problems and scheduling problems, fulfill the second and third assumptions \citep{An_et_al:2014,Neyshabouri_Berg:2017}. \blue{Finally, we assume that for all $\xb\in\calX$ and $\xib\in\Xi$, the objective function value has $\cb^\tp\xb+Q(\xb,\xib) \geq K$ for some $K>0$. This is satisfied in many settings in which $\cb^\tp\xb+Q(\xb,\xib)$ represents actual implementation costs.}

\begin{rem}
Although our discussions are based on the two-stage RO problem, we remark that several two-stage distributionally robust optimization problems can be reformulated in the form of~\eqref{prob:2S_robust}; see \ref{appdx:DRO}. Thus, our proposed i-C\&CG method can solve such problems as well. 
\end{rem}

\section{The Column-and-constraint Generation (C\&CG) Method} \label{sec:exact_C&CG}

Algorithm \ref{algo:exact_C&CG} summarizes the steps of the C\&CG method from \cite{Zeng_Zhao:2013}. At each iteration $j$, a master problem \eqref{prob:Exact_C&CG_Master} is solved using a subset of scenarios $\calS\subseteq\Xi$ to obtain a solution $\xb^j$. Since only a subset of the scenarios is considered, the master problem \eqref{prob:Exact_C&CG_Master} is a relaxation of~\eqref{prob:2S_robust}.  Thus, its optimal value serves as a valid lower bound $LB$ on the optimal value $\upsilon^\star$ of~\eqref{prob:2S_robust}.  Second, given the master problem solution $\xb^j$, a scenario $\xib^\star\in\Xi$ is identified and an upper bound $UB$ is computed from solving the subproblem \eqref{prob:Exact_C&CG_Subproblem}. With the updated $LB$ and $UB$, the current relative optimality gap is computed, i.e., $(UB-LB)/UB$. If this gap is less than the prescribed tolerance $\varepsilon$, then the algorithm terminates and returns the (nearly) optimal solution.  Otherwise, the scenario set $\calS$ is enlarged by $\xib^\star$, the master problem is re-solved, and the method continues.  Adding a new scenario in the master problem is equivalent to adding new second-stage variables and constraints.

A key computational challenge for the C\&CG method is the need to solve the (enlarged) master and subproblems to optimality in each iteration. As mentioned earlier, most existing research focuses on solution methods and reformulations for the subproblem.  In this paper, we assume that the maximin subproblem \eqref{prob:Exact_C&CG_Subproblem} can be solved by an optimality oracle (see, e.g., \cite{Bertsimas_Shtern:2018,Zeng_Zhao:2013}). Moreover, if the uncertainty set $\Xi$ is a polytope, then the oracle outputs an extreme point of $\Xi$ as the optimal solution $\xib^\star$ in step~2. Under the assumption that $\Xi$ is either a finite set or a polytope, the C\&CG method terminates in a finite number of iterations \citep{Bertsimas_Shtern:2018,Zeng_Zhao:2013}.

Note that the first-stage problem in many real-life applications involves a large number of integer-valued variables.  For this and other potential reasons, solving the master problem \eqref{prob:Exact_C&CG_Master} to optimality can be challenging. Moreover, once the method encounters a challenging master problem at some iteration, the subsequent master problems are potentially even more challenging to solve because of the enlarged scenario set. On the other hand, if the master problem is only solved to a prescribed (large) relative gap, then the value $\cb^\tp\xb^j+\delta^j$ might not be a valid lower bound on $\upsilon^\star$, so allowing inexactness is not something that can be done naively.  These observations motivate our proposed i-C\&CG method presented in the next section.
\IncMargin{0em}
\begin{algorithm}[t] 
\setstretch{1.1}
\SetKwInOut{Initialization}{Initialization}
\Initialization{Set $LB=0$, $UB=\infty$, $\varepsilon\in[0,1]$, $\calS=\emptyset$, $j=1$.} 
\textbf{1. Master problem.} Solve the master problem:
\begin{subequations}
\begin{align}
\underset{\xb\in\calX,\,\delta}{\text{minimize}} \quad
&  \cb^\tp\xb + \delta \\
\text{subject to} \quad
&  \delta \geq  Q(\xb,\xib),\quad \forall \xib\in\calS, \\
&  \cb^\tp\xb + \delta \geq 0.
\end{align}  \label{prob:Exact_C&CG_Master}%
\end{subequations}
\hspace{1.6mm}Record the optimal solution $(\xb^j,\delta^j)$ and set $LB \leftarrow  \cb^\tp\xb^j + \delta^j$. \\
\textbf{2. Subproblem.} Solve the subproblem for fixed $\xb=\xb^j$:
\begin{equation}   \label{prob:Exact_C&CG_Subproblem}
    \blue{D^j} = \max_{\xib\in\Xi} \min_{\yb\in\calY(\xb^j,\xib)} \qb^\tp \yb .
\end{equation}
\hspace{1.6mm}Record the optimal solution $\xib^\star$ and value $D^j$.\\
\hspace{2.5mm} Set $UB \leftarrow  \min\big\{UB,\,\cb^\tp\xb^j +D^j\big\}$.\\
\textbf{3. Optimality test.} If $(UB-LB)/UB<\varepsilon$, then terminate and return $\xb^j$. \\
\textbf{4. Scenario set enlargement.}\\
\quad Enlarge the scenario set $\calS \leftarrow \calS\cup\{\xib^\star\}$. \\ \quad Update $j \leftarrow j+1$ and go back to step 1.
\BlankLine
\caption{Column-and-constraint generation (C\&CG) method} \label{algo:exact_C&CG}
\end{algorithm}\DecMargin{1em}

\section{Inexact Column-and-constraint Generation (i-C\&CG) Method} \label{sec:inexact_C&CG}

In this section, we present our i-C\&CG method that aims to address computational difficulties encountered when solving challenging master problems in the C\&CG method. In Section \ref{subsec:inexact_C&CG_algo}, we discuss the details of our i-C\&CG approach. In Section \ref{subsec:inexact_C&CG_theory}, we derive theoretical properties of our i-C\&CG approach and prove a finite convergence guarantee for it. Finally, in Section \ref{subsec:inexact_C&CG_variant}, we demonstrate the computational flexibility of our i-C\&CG method and present some variants of it.

\subsection{i-C\&CG method}  \label{subsec:inexact_C&CG_algo}

Algorithm \ref{algo:inexact_C&CG} summarizes our proposed i-C\&CG method, which shares a similar structure with the previously stated C\&CG method except for a few additional simple updates in step 1 and the additional backtracking routine in step 3.  In step 1.1, the algorithm solves the master problem \eqref{prob:Inexact_C&CG_Master} to within a relative optimality gap tolerance of $\varepsilon_{MP}^j$ and records the best feasible solution. Observe that as long as \eqref{prob:2S_robust} has an optimal solution $\xb^\star$ with finite objective value $\upsilon^\star$ and~$\Lbar$ is finite, the master problem is guaranteed to be feasible. Indeed, one can easily verify that $(\xb,\delta)=(\xb^\star,\max\{\Lbar,\upsilon^\star\}-\cb^\tp\xb^\star)$ is a feasible solution. See Remark~\ref{rem:lower_bound} below for further discussion on the role played by $\Lbar$ in constraint \eqref{eqn:Inexact_C&CG_Master_con2}.  In step 1.2, we obtain a lower bound~$L^j$ and upper bound~$U^j$ on the optimal value $\upsilon^\star_j$ from the solver, which are used in the backtracking routine to ensure convergence.  If $L^j$ is a valid lower bound, then $\ell$ is set as the current iteration index to indicate that it is the most recent iteration with such a valid bound (see Proposition \ref{prop:valid_LB_iC&CG}). In step 1.3, the algorithm sets $\Lbar$ to $U^j$, which may accelerate the lower bound improvement for the next master problem; again, see Remark~\ref{rem:lower_bound} below.  Step 2 of the i-C\&CG method is the same as that for the C\&CG method, where the subproblem is solved to obtain a scenario $\xib^\star$ and compute a valid upper bound.

The backtracking routine in step~3 balances the computational gains and inaccuracies from solving the master problems inexactly.  In particular, we shall show that this backtracking routine can adapt the inexactness tolerance in order to guarantee finite convergence of the method.  The routine can be described as follows.  First, if the \textit{actual} relative gap $(\Ubar-L^\ell)/\Ubar$ is less than the prescribed tolerance $\varepsilon$, then the algorithm terminates and returns $\xb^j$ as the (nearly) optimal solution of the overall problem.  Otherwise, the algorithm proceeds to an \textit{exploitation} or \textit{exploration} step based on the value of the \textit{inexact} relative gap $(\Ubar-U^j)/\Ubar$.  In an exploration step, the algorithm proceeds to step~4 and enlarges the current scenario set as in the C\&CG method.  In an exploitation step, the algorithm exploits knowledge of the current best valid lower bound $L^\ell$ and proceeds to step~1 by solving the master problem based on $L^\ell$ with a reduced relative gap tolerance $\varepsilon_{MP}^j$.  This corrects any inaccuracies from solving prior master problems.

\begin{rem}\label{rem:lower_bound}
In step 1.3, one may set $\Lbar$ to $L^j$ (instead of $U^j$) to ensure the lower bound validity and thus, convergence. In this case, since $L^j$ is a valid lower bound on $\upsilon^\star$, the constraint \eqref{eqn:Inexact_C&CG_Master_con2} is always valid and the algorithm will always set $\ell$ (which tracks the most recent iteration that $L^\ell$ provides a valid lower bound) to the current iteration index $j$. On the other hand, setting $\Lbar$ to $U^j$ (as stated in the algorithm) may accelerate the exploration process and help the solver escape from proving the optimality of a given feasible solution. We have found this to be computationally effective when some master problems in early iterations are challenging and the lower bounds that they provide may be very loose.  In any case, the fact that the exploitation step sets $\Lbar \gets L^\ell$ ensures convergence as we will show in Section \ref{subsec:inexact_C&CG_theory}.
\end{rem}


\begin{rem}
If $\varepsilon_{MP}^j = 0$ and $\epst<\varepsilon$, then $L^j=U^j$ since the master problem is solved to optimality. Thus, in this case, $\Lbar$ is a valid lower bound and the algorithm sets $\ell \gets j$. As a result, the backtracking routine reduces to checking the termination condition.  Hence, our i-C\&CG method is a generalization of the C\&CG method with an additional valid lower bound constraint \eqref{eqn:Inexact_C&CG_Master_con2}.
\end{rem}

\IncMargin{0em}
\begin{algorithm}[t] 
\setstretch{1.1}
\SetKwInOut{Initialization}{Initialization}
\Initialization{$\Lbar\gets0$, $\Ubar\gets\infty$, \blue{$\varepsilon\in[0,1]$}, $\epst\in(0,\varepsilon/(1+\varepsilon))$,  $\{\varepsilon_{MP}^j\in(0,1)\}_{j\in\N}$, $\alpha\in(0,1)$, $\calS\gets\emptyset$, $j\gets1$, $\ell\gets0$.} 
\textbf{1. Master problem.} \\
\textbf{\small\hspace{5.3mm}1.1.} Solve the master problem to within a relative optimality gap of $\varepsilon_{MP}^j$:
\begin{subequations}
\begin{align}
\upsilon^\star_j = \,\, \underset{\xb\in\calX,\,\delta}{\text{minimize}} \quad
&  \cb^\tp\xb + \delta \\
\text{subject to} \quad
&  \delta \geq  Q(\xb,\xib),\quad \forall \xib\in\calS, \label{eqn:Inexact_C&CG_Master_con1} \\
&  \cb^\tp \xb + \delta \geq \Lbar \label{eqn:Inexact_C&CG_Master_con2}.
\end{align}  \label{prob:Inexact_C&CG_Master}%
\end{subequations}
\hspace{3.3mm}Record the best feasible solution $(\xb^j,\delta^j)$ found.  \\
\textbf{\small\hspace{5.3mm}1.2.} Record a lower bound $L^j\geq \Lbar$ and upper bound $U^j= \cb^\tp\xb^j + \delta^j$ of $\upsilon^\star_j$. \\ \hspace{4.3mm} If $L^j> \Lbar$, then set $\ell \leftarrow  j$.  \\
\textbf{\small\hspace{5.3mm}1.3.} Set $\Lbar \leftarrow  U^j$.\\
\textbf{2. Subproblem.} Solve the subproblem \eqref{prob:Exact_C&CG_Subproblem} for fixed $\xb=\xb^j$. \\
\hspace{5.3mm}Record the optimal solution $\xib^\star$ and value $D^j$. \\ \hspace{4.3mm} Set $\Ubar \leftarrow  \min\big\{\Ubar,\, \cb^\tp\xb^j +D^j\big\}$.\\

\textbf{3. Optimality test and backtracking routine.} \\
\hspace{5.3mm}If $(\Ubar-L^\ell)/\Ubar<\varepsilon$, then terminate and return $\xb^j$; otherwise, do the following.
\begin{itemize}
    \item \textbf{Exploitation}: If $(\Ubar-U^j)/\Ubar < \epst$, then set $j\leftarrow \ell$ and $\Lbar\leftarrow L^\ell$. \\ Set $\varepsilon^j_{MP}\leftarrow \alpha \varepsilon^j_{MP}$ for all $j\geq \ell$ and go back to step 1. \vspace{-3mm}
    \item \textbf{Exploration}: If $(\Ubar-U^j)/\Ubar  \geq \epst$, then go to step 4.  \vspace{-3mm}
\end{itemize}
\textbf{4. Scenario set enlargement.}\\
\hspace{5.3mm}Enlarge the scenario set $\calS \leftarrow \calS\cup\{\xib^\star\}$. \\ \hspace{4.0mm} Update $j \leftarrow j+1$ and go back to step 1.
\BlankLine
\caption{Inexact column-and-constraint (i-C\&CG) method} \label{algo:inexact_C&CG}
\end{algorithm}\DecMargin{1em}

\subsection{Theoretical Properties}  \label{subsec:inexact_C&CG_theory}

In this section, we study the theoretical properties of the i-C\&CG method. In particular, we derive an upper bound on the actual relative gap in the presence of inexactness (Proposition \ref{prop:terminate_MIP_gap_iC&CG} and Corollary \ref{coro:terminate_MIP_gap_iC&CG}) and prove a finite convergence property (Theorem \ref{thm:finite_convg_iC&CG}). First, in Proposition \ref{prop:valid_LB_iC&CG}, we show the validity of $L^\ell$ as a lower bound on $\upsilon^\star$ (see \ref{appdx:proof_valid_LB} for a proof).

\begin{prop}  \label{prop:valid_LB_iC&CG}
At any iteration, the value $L^\ell$ is a valid lower bound on the optimal value $\upsilon^\star$ to problem \eqref{prob:2S_robust}, i.e., $L^\ell \leq \upsilon^\star$.
\end{prop}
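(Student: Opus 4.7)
The plan is to prove the statement by analyzing the update rule for $\ell$ in step~1.2. Since $\ell$ is only reassigned to the current iteration index $j$ under the condition $L^j > \Lbar$ (where $\Lbar$ denotes the right-hand side of constraint~\eqref{eqn:Inexact_C&CG_Master_con2} at iteration $j$), it suffices to show that whenever $\ell \gets j$ is triggered, $L^j \leq \upsilon^\star$. The trivial initial case $\ell = 0$ is handled by interpreting $L^0$ as the initial value $\Lbar = 0$, which is a valid lower bound on $\upsilon^\star$ under the standing assumptions (the non-negativity implicit in the C\&CG master problem~\eqref{prob:Exact_C&CG_Master} of Algorithm~\ref{algo:exact_C&CG} and the positive recourse cost $Q \geq K > 0$).

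The core observation is that whenever $\ell \gets j$ is triggered, constraint~\eqref{eqn:Inexact_C&CG_Master_con2} is not active at the master optimum, so the master has the same optimal value as its variant obtained by deleting~\eqref{eqn:Inexact_C&CG_Master_con2}. Let $\upsilon^\star_j$ denote the optimum of~\eqref{prob:Inexact_C&CG_Master} and let $\tilde\upsilon^\star_j$ denote the optimum after removing~\eqref{eqn:Inexact_C&CG_Master_con2}. The inequality $\tilde\upsilon^\star_j \leq \upsilon^\star_j$ is immediate. For the reverse, I will argue by contradiction: suppose $\tilde\upsilon^\star_j < \upsilon^\star_j$ and let $(\tilde\xb, \tilde\delta)$ attain $\tilde\upsilon^\star_j$. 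If $\cb^\tp \tilde\xb + \tilde\delta \geq \Lbar$, then $(\tilde\xb, \tilde\delta)$ is feasible to~\eqref{prob:Inexact_C&CG_Master} as well, giving $\upsilon^\star_j \leq \tilde\upsilon^\star_j$, a contradiction. Otherwise $\cb^\tp \tilde\xb + \tilde\delta < \Lbar$, and I lift $\tilde\delta$ to $\tilde\delta' := \Lbar - \cb^\tp \tilde\xb > \tilde\delta$. Since $\tilde\delta' > \tilde\delta \geq Q(\tilde\xb, \xib)$ for every $\xib \in \calS$, the lifted point $(\tilde\xb, \tilde\delta')$ satisfies~\eqref{eqn:Inexact_C&CG_Master_con1}; it satisfies~\eqref{eqn:Inexact_C&CG_Master_con2} by construction, and has objective exactly $\Lbar$. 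Hence $\upsilon^\star_j \leq \Lbar$, which contradicts $\upsilon^\star_j \geq L^j > \Lbar$.

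Once the equality $\upsilon^\star_j = \tilde\upsilon^\star_j$ is established, the proof concludes quickly. The relaxed master (without~\eqref{eqn:Inexact_C&CG_Master_con2}) only enforces $\delta \geq Q(\xb, \xib)$ for $\xib \in \calS \subseteq \Xi$, which is a standard scenario-subset relaxation of~\eqref{prob:2S_robust}; therefore $\tilde\upsilon^\star_j \leq \upsilon^\star$. Combining this with the solver's guarantee $L^j \leq \upsilon^\star_j$ yields the chain $L^j \leq \upsilon^\star_j = \tilde\upsilon^\star_j \leq \upsilon^\star$, as required.

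The main subtlety is the ``$\delta$-lifting'' step in the second paragraph. It exploits the purely structural fact that $\delta$ has no upper bound and that the scenario constraints~\eqref{eqn:Inexact_C&CG_Master_con1} are one-sided in $\delta$, so that one can always increase $\delta$ to meet~\eqref{eqn:Inexact_C&CG_Master_con2} with equality while preserving feasibility. Without this trick, an optimizer of the relaxed problem whose objective falls below $\Lbar$ would not yield a contradiction, and the equality $\upsilon^\star_j = \tilde\upsilon^\star_j$ could fail even when $\upsilon^\star_j > \Lbar$.
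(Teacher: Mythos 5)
Your proof is correct and rests on the same key ingredients as the paper's: the triggering condition $L^j > \Lbar$ from step 1.2, the feasible point obtained by lifting $\delta$ to $\Lbar - \cb^\tp\xb$ (which is exactly the construction used in the paper's auxiliary Proposition~\ref{prop:opt_val_MP_iC&CG}), and the scenario-subset relaxation property. The only difference is organizational: you argue directly that constraint~\eqref{eqn:Inexact_C&CG_Master_con2} cannot affect the master's optimal value when $L^j > \Lbar$, whereas the paper argues by contradiction with a case split on whether $\Lbar > \upsilon^\star$.
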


Since $\Ubar$ is an upper bound on $\upsilon^\star$, together with Proposition \ref{prop:valid_LB_iC&CG}, these results justify the use of the \textit{actual} relative gap $(\Ubar-L^\ell)/\Ubar$ as the termination condition.  That is, if the algorithm terminates, then the actual relative gap computed based on a valid lower and a valid upper bound on $\upsilon^\star$ is less than the prescribed tolerance $\varepsilon$.  Next, Proposition \ref{prop:terminate_MIP_gap_iC&CG} provides an upper bound on the actual relative gap when the algorithm reaches the exploitation step.

\begin{prop}  \label{prop:terminate_MIP_gap_iC&CG}
If the i-C\&CG method reaches the exploitation step at iteration $j$, i.e., the actual relative gap satisfies $(\Ubar-L^\ell)/\Ubar\geq \varepsilon$, but the inexact relative gap satisfies  $(\Ubar-U^j)/\Ubar<\epst$, then the actual relative gap is at most $(1-\epst)^{-1} \prod_{k=\ell}^j (1-\varepsilon_{MP}^k)^{-1}-1$.
\end{prop}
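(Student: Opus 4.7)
The plan is to telescope a chain of inequalities relating $\Ubar$ back to $L^\ell$ through the intermediate upper bounds $U^\ell, U^{\ell+1}, \ldots, U^j$. Two ingredients enter: the per-iteration master-problem gap tolerance, which gives $L^k \geq (1-\varepsilon_{MP}^k) U^k$ for every $k$, and the bookkeeping of $\Lbar$ in steps 1.2--1.3, which will imply $L^k \leq U^{k-1}$ for $\ell < k \leq j$.

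First, I would argue that between iterations $\ell$ and $j$ only the exploration branch of step 3 is taken, since an intermediate exploitation would rewind the iteration counter and prevent $j$ from being reached. Hence, for every $k$ with $\ell < k \leq j$, the value of $\Lbar$ entering iteration $k$ is precisely $U^{k-1}$ (set by step 1.3 of iteration $k-1$). Since $\ell$ is the most recent iteration at which $L^\cdot > \Lbar$, the check at step 1.2 of each such $k$ must have failed, giving $L^k \leq U^{k-1}$; combined with the master gap, this yields $U^k \leq U^{k-1}/(1-\varepsilon_{MP}^k)$.

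Telescoping from $k = \ell+1$ to $k = j$ and appending the master gap at iteration $\ell$ itself (namely $U^\ell \leq L^\ell/(1-\varepsilon_{MP}^\ell)$) gives $U^j \leq L^\ell \prod_{k=\ell}^j (1-\varepsilon_{MP}^k)^{-1}$. The exploitation trigger $(\Ubar - U^j)/\Ubar < \epst$ then yields $\Ubar \leq U^j/(1-\epst)$, and combining,
\[
\frac{\Ubar}{L^\ell} \,\leq\, \frac{1}{(1-\epst)\,\prod_{k=\ell}^j (1-\varepsilon_{MP}^k)}.
\]
By Proposition \ref{prop:valid_LB_iC&CG}, $L^\ell \leq \upsilon^\star \leq \Ubar$, so that $(\Ubar - L^\ell)/\Ubar \leq (\Ubar - L^\ell)/L^\ell = \Ubar/L^\ell - 1$, from which the stated bound follows directly.

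The main obstacle I foresee is in the first step: one must carefully verify that no exploitation call occurs between iteration $\ell$ and iteration $j$, so that $\Lbar$ is updated only via step 1.3 throughout, and therefore the identification $\Lbar = U^{k-1}$ at the start of iteration $k$ is valid for every $k \in \{\ell+1,\ldots,j\}$. This bookkeeping is the subtle part; once it is secured, the rest is a straightforward telescoping of gap inequalities.
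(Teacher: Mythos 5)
Your proposal is correct and takes essentially the same route as the paper: the same telescoping $U^k \leq U^{k-1}/(1-\varepsilon_{MP}^k)$ from $k=j$ down to $U^\ell \leq L^\ell/(1-\varepsilon_{MP}^\ell)$, combined with $\Ubar < U^j/(1-\epst)$ from the exploitation trigger and $L^\ell \leq \upsilon^\star \leq \Ubar$ from Proposition \ref{prop:valid_LB_iC&CG}. The only differences are cosmetic: you merge the paper's two cases ($j=\ell$ and $j>\ell$) via the empty-product convention, and you are more explicit about the bookkeeping (no intermediate exploitation, so $\Lbar = U^{k-1}$ entering iteration $k$ and the failed check in step 1.2 gives $L^k \leq U^{k-1}$) that the paper compresses into the assertion $L^{k+1}=U^k$.
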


\begin{proof}
First, we consider the case that $j=\ell$. Since $(U^\ell-L^\ell)/U^\ell\leq\varepsilon_{MP}^\ell$ follows from the inexact solution of the master problem~\eqref{prob:Inexact_C&CG_Master}, it follows that $U^\ell\leq L^\ell/(1-\varepsilon_{MP}^\ell)$ and thus,
\begin{equation} \label{eqn_pf:terminate_MIP_gap_iC&CG_0}
  \frac{U^\ell-L^\ell}{L^\ell} \leq \frac{1}{1-\varepsilon_{MP}^\ell}-1 =: \varepsilon' > 0.
\end{equation}
%
Then, we have
\begin{equation} \label{eqn_pf:terminate_MIP_gap_iC&CG_1}
  \frac{U^\ell}{1-\epst} > \Ubar \geq \upsilon^\star \geq L^\ell \geq \frac{U^\ell}{1+\varepsilon'},  
\end{equation}
where the first inequality follows from $(\Ubar-U^\ell)/\Ubar<\epst$, the second inequality follows since $\Ubar$ is an upper bound for $\nu^\star$, the third inequality is a consequence of Proposition~\ref{prop:valid_LB_iC&CG}, and the last inequality follows from \eqref{eqn_pf:terminate_MIP_gap_iC&CG_0}. Using the chain of inequalities \eqref{eqn_pf:terminate_MIP_gap_iC&CG_1}, we obtain the desired inequality:
\begin{equation}\label{eq:new-number}
  \frac{\Ubar-L^\ell}{\Ubar}< \frac{1}{\Ubar}\Bigg( \frac{U^\ell}{1-\epst}-\frac{U^\ell}{1+\varepsilon'} \Bigg) \leq \frac{1+\varepsilon'}{1-\epst}-1,
\end{equation}
where the first inequality follows from the facts that \eqref{eqn_pf:terminate_MIP_gap_iC&CG_1} shows $\Ubar<U^\ell/(1-\epst)$ and $L^\ell\geq U^\ell/(1+\varepsilon')$, and the second inequality follows from the fact that \eqref{eqn_pf:terminate_MIP_gap_iC&CG_1} shows $U^\ell/\Ubar\leq 1+\varepsilon'$.  Hence, from \eqref{eq:new-number}, the desired conclusion follows in the case that $j=\ell$.

Next, we consider the case that $j>\ell$. Note that, for all $k\in\{\ell,\dots,j-1\}$ in step 1, we have $(U^{k+1}-L^{k+1})/U^{k+1} \leq \varepsilon_{MP}^{k+1}$ from solving the master problem to within a relative gap of $\varepsilon_{MP}^{k+1}$. Moreover, step 1.2 implies $L^{k+1}=U^k$ for all $k\in\{\ell,\dots,j-1\}$. Therefore, 
\begin{equation}  \label{eqn_pf:terminate_MIP_gap_iC&CG_2}
  U^{k+1} \leq \frac{L^{k+1}}{1-\varepsilon_{MP}^{k+1}} = \frac{U^k}{1-\varepsilon_{MP}^{k+1}} 
\end{equation}
for all $k\in\{\ell,\dots,j-1\}$. Applying inequality \eqref{eqn_pf:terminate_MIP_gap_iC&CG_2}, we obtain 
$$U^j \leq U^{j-1}\cdot\frac{1}{1-\varepsilon_{MP}^j}\leq \cdots \leq U^\ell\cdot\prod_{k=\ell+1}^j \frac{1}{1-\varepsilon_{MP}^k} \leq L^\ell\cdot \prod_{k=\ell}^j\frac{1}{1-\varepsilon_{MP}^k}, $$
where the last inequality follows from $(U^\ell-L^\ell)/U^\ell\leq\varepsilon_{MP}^\ell$.  This in turn implies
\begin{equation}  \label{eqn_pf:terminate_MIP_gap_iC&CG_3}
\frac{U^j-L^\ell}{L^\ell}\leq \prod_{k=\ell}^j\frac{1}{1-\varepsilon_{MP}^k}-1 =: \varepsilon'' > 0.
\end{equation}
%
Therefore, the chain of inequalities similar to \eqref{eqn_pf:terminate_MIP_gap_iC&CG_1} is as follows:
\begin{equation} \label{eqn_pf:terminate_MIP_gap_iC&CG_4}
  \frac{U^j}{1-\epst} > \Ubar \geq \upsilon^\star \geq L^\ell \geq  \frac{U^j}{1+\varepsilon''},
\end{equation}
where the last inequality follows from \eqref{eqn_pf:terminate_MIP_gap_iC&CG_3}. Hence, we can derive
\begin{equation}\label{eq:newer-number}
\frac{\Ubar-L^\ell}{\Ubar}< \frac{1}{\Ubar} \Bigg( \frac{U^j}{1-\epst}-\frac{U^j}{1+\varepsilon''} \Bigg) \leq \frac{1+\varepsilon''}{1-\epst}-1,
\end{equation}
where the first inequality follows from the fact that \eqref{eqn_pf:terminate_MIP_gap_iC&CG_4} shows $\Ubar<U^j/(1-\epst)$ and $L^\ell>U^j/(1+\varepsilon'')$, and the second inequality follows from the fact that \eqref{eqn_pf:terminate_MIP_gap_iC&CG_4} shows $U^j/\Ubar\leq 1+\varepsilon''$.  Hence, from \eqref{eq:newer-number}, the desired conclusion also follows in the case that $j > \ell$. 
\end{proof}

Proposition \ref{prop:terminate_MIP_gap_iC&CG} quantifies the effect of inexact solves of master problems on the actual relative gap. In Corollary \ref{coro:terminate_MIP_gap_iC&CG}, we provide a bound on the actual relative gap if the algorithm updates $\Lbar$ to $L^j$ in step 1.3, which is equivalent to the C\&CG method with inexact solves of master problems only \cite{Tonissen_et_al:2021} (see \ref{appdx:proof_coro_terminate_MIP_gap_iC&CG} for a proof).

\begin{cor}  \label{coro:terminate_MIP_gap_iC&CG}
Assume that in step 1.3, $\Lbar$ is updated as $L^j$ instead of $U^j$. At iteration $j$, if the actual relative gap satisfies $(\Ubar-L^\ell)/\Ubar\geq \varepsilon$, but the inexact relative gap satisfies $(\Ubar-U^j)/\Ubar<\epst$ in step 3, then the actual relative gap is at most $(1-\epst)^{-1}(1-\varepsilon_{MP}^j)^{-1}-1$.
\end{cor}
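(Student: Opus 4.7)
The plan is to reduce the corollary to the $j=\ell$ case that already appears in the proof of Proposition~\ref{prop:terminate_MIP_gap_iC&CG}. The crux is that under the modified update $\Lbar \leftarrow L^j$ in step~1.3, the quantity $\Lbar$ remains a valid lower bound on $\upsilon^\star$ at every iteration, which in turn forces $L^\ell = L^j$ at the current iteration, thereby eliminating the product of inexactness factors in the bound.

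First, I would argue by induction that $\Lbar \leq \upsilon^\star$ throughout the modified procedure. The base case follows from $\Lbar = 0$ together with the standing non-negativity of $\upsilon^\star$ implied by $Q(\xb,\xib)\geq K>0$. For the inductive step, suppose $\Lbar \leq \upsilon^\star$ at the start of an iteration; then constraint~\eqref{eqn:Inexact_C&CG_Master_con2} is satisfied by any optimal pair of~\eqref{prob:2S_robust}, so~\eqref{prob:Inexact_C&CG_Master} is a valid relaxation with $\upsilon^\star_j \leq \upsilon^\star$. Any solver-returned lower bound then satisfies $L^j \leq \upsilon^\star_j \leq \upsilon^\star$, so the update $\Lbar \leftarrow L^j$ preserves validity.

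Second, I would show that $L^\ell = L^j$ at the current iteration. Under the modified rule, $\Lbar$ at the start of iteration $k$ equals the last recorded lower bound (either $L^{k-1}$ from normal progression or $L^\ell$ from an exploitation reset), and the constraint $L^k \geq \Lbar$ enforced in step~1.2 keeps the returned values non-decreasing along the computed trajectory. Whenever a strict increase occurs, step~1.2 triggers $\ell \leftarrow k$; whenever equality holds, $\ell$ is left alone but $L^\ell = \Lbar = L^k$ by the bookkeeping. In either case, the current $L^\ell$ coincides with the current $L^j$.

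With these two facts in hand, the remainder mirrors the $j=\ell$ portion of the proof of Proposition~\ref{prop:terminate_MIP_gap_iC&CG}. Setting $\varepsilon' := (1-\varepsilon_{MP}^j)^{-1} - 1$, the inexact-solve tolerance $(U^j-L^j)/U^j\leq \varepsilon_{MP}^j$ yields $L^j \geq U^j/(1+\varepsilon')$; combining this with $\Ubar < U^j/(1-\epst)$ from the exploitation-trigger condition and the validity $L^j \leq \upsilon^\star \leq \Ubar$ established above produces the chain $U^j/(1-\epst) > \Ubar \geq L^j \geq U^j/(1+\varepsilon')$, from which $(\Ubar - L^\ell)/\Ubar < (1+\varepsilon')/(1-\epst) - 1 = (1-\epst)^{-1}(1-\varepsilon_{MP}^j)^{-1} - 1$ follows as in~\eqref{eq:new-number}. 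The main obstacle I foresee is the careful bookkeeping around $\ell$: step~1.2 updates $\ell$ only on a strict increase, and the exploitation step can reset $j \leftarrow \ell$, so verifying that $L^\ell = L^j$ persists under these mechanics (especially across exploitation resets, where step~1.3's modified update is re-applied) is where the argument needs the most care.
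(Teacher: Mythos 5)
Your proposal is correct and follows essentially the same route as the paper's own (much terser) proof: observe that the modified update $\Lbar\leftarrow L^j$ keeps $\Lbar$ a valid lower bound on $\upsilon^\star$, conclude that $\ell$ effectively coincides with $j$, and invoke the $j=\ell$ portion of the proof of Proposition~\ref{prop:terminate_MIP_gap_iC&CG}. The extra care you take with the bookkeeping of $\ell$ when $L^j=\Lbar$ (strict-increase trigger in step~1.2) is a detail the paper glosses over, but it does not change the argument.
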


Recall that in the backtracking routine (step 3), if the termination condition is not satisfied, then the algorithm proceeds to either the exploration or the exploitation step.  If the inexact relative gap is large (i.e., greater than \blue{or equal to} $\epst$), then the algorithm explores possible new valid lower bounds on $\upsilon^\star$.  Otherwise, only a relatively small improvement in the lower bound could be achieved by the exploration step, and our i-C\&CG method switches to exploiting the best current lower bound. Proposition \ref{prop:terminate_MIP_gap_iC&CG} shows that the actual relative gap is bounded by $\epst$ (the backtracking routine parameter) and $\varepsilon_{MP}^k$ (the relative gap from solving master problems). Therefore, to close the actual relative gap, the algorithm reduces $\varepsilon_{MP}^k$ in every exploitation step. Finally, we leverage the results in Proposition \ref{prop:terminate_MIP_gap_iC&CG} to show the finite convergence of our proposed i-C\&CG method in Theorem~\ref{thm:finite_convg_iC&CG}.

\begin{thm}  \label{thm:finite_convg_iC&CG}
If $\epst<\varepsilon/(1+\varepsilon)$, then Algorithm \ref{algo:inexact_C&CG} terminates in a finite number of iterations.
\end{thm}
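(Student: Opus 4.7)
The plan is to partition the iterations of Algorithm~\ref{algo:inexact_C&CG} according to which branch of step~3 is taken: an \emph{exploration} iteration adds a new scenario in step~4, while an \emph{exploitation} iteration backtracks by resetting $j\leftarrow\ell$ and shrinking $\varepsilon^j_{MP}$. Finite termination then reduces to showing (i) the total number of exploration iterations is finite, and (ii) between any two consecutive explorations (and after the last one) there are only finitely many exploitation iterations.

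For (i), I would invoke the standard C\&CG reasoning. The subproblem oracle returns an extreme point $\xib^\star$ of $\Xi$, and there are only finitely many such extreme points because $\Xi$ is either finite or a polytope. It therefore suffices to argue that every exploration iteration contributes a scenario not previously in $\calS$. If instead $\xib^\star\in\calS$, then constraint~\eqref{eqn:Inexact_C&CG_Master_con1} forces $\delta^j\geq Q(\xb^j,\xib^\star) = D^j$, so $\Ubar\leq \cb^\tp\xb^j+D^j\leq \cb^\tp\xb^j+\delta^j = U^j$. Consequently the inexact gap $(\Ubar-U^j)/\Ubar$ is nonpositive and hence strictly below $\epst$, forcing the algorithm into the exploitation branch. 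Thus each exploration adds a fresh extreme point, bounding the number of explorations by the number of extreme points of $\Xi$.

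For (ii), fix a maximal block of consecutive exploitation iterations. Throughout this block $\calS$ is unchanged, and a careful read of the bookkeeping shows that the integer values of $j$ and $\ell$ coincide and remain constant along the block (each exploitation does $j\leftarrow\ell$, and the subsequent step~1.2 either leaves $\ell$ unchanged or updates $\ell\leftarrow j=\ell$). Each exploitation scales $\varepsilon^\ell_{MP}$ by the factor $\alpha\in(0,1)$, so $\varepsilon^\ell_{MP}\to 0$ geometrically along the block. Applying Proposition~\ref{prop:terminate_MIP_gap_iC&CG} with $j=\ell$, any exploitation step satisfies
\begin{equation*}
\frac{\Ubar-L^\ell}{\Ubar}\leq \frac{1}{(1-\epst)(1-\varepsilon^\ell_{MP})}-1.
\end{equation*}
The hypothesis $\epst<\varepsilon/(1+\varepsilon)$ is equivalent to $(1-\epst)(1+\varepsilon)>1$, whence there exists a threshold $\eta>0$ (depending only on $\varepsilon$ and $\epst$) such that the right-hand side falls strictly below $\varepsilon$ whenever $\varepsilon^\ell_{MP}<\eta$. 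Because $\alpha<1$, the geometric schedule crosses this threshold after finitely many exploitations; at that point the actual relative gap must be below $\varepsilon$ and the termination test in step~3 must pass (unless the algorithm has already selected exploration, which also closes the block). Combining (i) and (ii) yields finite termination.

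The main obstacle is extracting the precise constant from the hypothesis $\epst<\varepsilon/(1+\varepsilon)$: this condition is exactly what makes $(1-\epst)(1+\varepsilon)>1$, thereby guaranteeing that the bound furnished by Proposition~\ref{prop:terminate_MIP_gap_iC&CG} eventually drops below $\varepsilon$ as $\varepsilon^\ell_{MP}\to 0$. Without this strict inequality, the right-hand side above could remain at or above $\varepsilon$ in the limit, admitting infinite exploitation blocks. Tying this algebraic observation to the geometric decay induced by $\alpha$ is the only non-routine ingredient; everything else is standard bookkeeping built on Proposition~\ref{prop:valid_LB_iC&CG} and the finite-extreme-point structure of $\Xi$.
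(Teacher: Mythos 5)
Your proof is correct and follows essentially the same route as the paper's: you show each exploration step must add a scenario not already in $\calS$ (since $\xib^\star\in\calS$ forces $U^j\geq\Ubar$ and hence the exploitation branch), bound the number of explorations by the extreme points of $\Xi$, and then use Proposition~\ref{prop:terminate_MIP_gap_iC&CG} together with the geometric decay of $\varepsilon_{MP}^\ell$ and the equivalence $\epst<\varepsilon/(1+\varepsilon)\iff\epst/(1-\epst)<\varepsilon$ to rule out infinite exploitation blocks. The paper phrases the second half as a contradiction argument over a fixed scenario set $\calSt$, but the substance is identical.
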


\begin{proof}
We first show that every visit to step 4 (via the exploration step) enlarges the scenario set~$\calS$. Equivalently, we want to prove that if $\xib^\star$ in step 2 belongs to the current scenario set $\calS$ (i.e., $\xib^\star\in\calS$), then we will not proceed to the exploration step. Consider master problem \eqref{prob:Inexact_C&CG_Master} at some iteration $j$ with current scenario set $\calS$. Assume that $\xib^\star\in\argmax_{\xib\in\Xi} Q(\xb^j,\xib)$ belongs to $\calS$. Then, by the definition of $U^j$ as an upper bound of $\upsilon^\star_j$, we have
$$\cb^\tp\xb^j + \max_{\xib\in\Xi} Q(\xb^j,\xib)  =\cb^\tp\xb^j + \max_{\xib\in\calS} Q(\xb^j,\xib) \leq U^j,$$
where the first equation follows from $\xib^\star\in\calS$. This implies that $\Ubar$ updated in step 2 satisfies
$$\Ubar-U^j \leq \cb^\tp\xb^j + \max_{\xib\in\Xi} Q(\xb^j,\xib) - U^j \leq 0. $$
Thus, the algorithm will not proceed to the exploration step.

Next, we show that Algorithm \ref{algo:inexact_C&CG} terminates in a finite number of iterations. Note that in the i-C\&CG method, if the termination criterion is not met, then the algorithm visits either the exploitation or the exploration step. Since we proved that every exploration step enlarges the scenario set, under the assumption that $\Xi$ is a finite set or a polytope (with a finite number of extreme points), the number of exploration steps is finite. Hence, to verify the finite convergence property, it suffices to show that for any fixed scenario set $\calSt\subset\Xi$, the master problem \eqref{prob:Inexact_C&CG_Master} is solved at most finitely many times (with possibly different $\Lbar$ values). Suppose, on the contrary, that master problem \eqref{prob:Inexact_C&CG_Master} with scenario set $\calSt$ is solved infinitely many times. This can happen only when neither the termination nor the exploration step is visited. That is, starting from the first re-visit of the master problem with scenario set $\calSt$ via an exploitation step, Algorithm \ref{algo:inexact_C&CG} proceeds with the exploitation step forever. \blue{This implies that the conditions in Proposition \ref{prop:terminate_MIP_gap_iC&CG} are satisfied with $j=\ell$, i.e., $(\Ubar-L^\ell)/\Ubar\geq \varepsilon$, but $(\Ubar-U^\ell)/\Ubar<\epst$. By} Proposition \ref{prop:terminate_MIP_gap_iC&CG}, the actual relative gap is bounded by $1/[(1-\epst)(1-\varepsilon_{MP}^\ell)]-1$.  \blue{However,} since the value of $\varepsilon^\ell_{MP}$ is reduced in each exploitation step, it follows that $\varepsilon^\ell_{MP}$ converges to zero as the i-C\&CG method continues with the exploitation step. Therefore, the actual relative gap $1/[(1-\epst)(1-\varepsilon_{MP}^\ell)]-1$ converges to $\epst/(1-\epst)$, which is less than $\varepsilon$ by our assumption. This implies that after a sufficiently large number of iterations, the termination condition will be satisfied, which contradicts that master problem \eqref{prob:Inexact_C&CG_Master} with scenario set $\calSt$ is solved infinitely many times.
\end{proof}

\begin{rem}
\blue{By setting the actual relative gap tolerance to $\varepsilon=0$, our i-C\&CG method converges to the exact optimal solution to \eqref{prob:2S_robust}.}
\end{rem}

\begin{rem}
We provide the following guidelines for choosing the i-C\&CG method parameters, namely, $\epst$, $\{\varepsilon_{MP}^j\}$, and $\alpha$.  First, recall that $\epst$ determines whether exploitation or exploration is performed. As suggested by Theorem \ref{thm:finite_convg_iC&CG}, one can set \blue{$\epst<\varepsilon/(1+\varepsilon)$} and choose $\epst$ close to this upper bound to favor exploitation. Second, recall that $\{\varepsilon_{MP}^j\}$ and $\alpha$ control the extent of the inexactness allowed by the method. If $\varepsilon_{MP}^j$ is large (e.g., greater than $\varepsilon$), then a value of $\alpha$ that shrinks $\varepsilon_{MP}^j$ at a relatively fast rate is preferred.  In contrast, if $\varepsilon_{MP}^j$ is comparable with $\varepsilon$, then a value of $\alpha$ close to $1$ that shrinks $\varepsilon_{MP}^j$ more slowly is preferred. Finally, we emphasize that there is no one set of parameters that yields the best performance for all problems, and indeed, such a flexibility allows the i-C\&CG method to adapt to problems of different structures.
\end{rem}

\subsection{Variants of i-C\&CG} \label{subsec:inexact_C&CG_variant}

Algorithm \ref{algo:inexact_C&CG} provides a general framework to handle challenging master problems. That is, our proposed i-C\&CG is flexible, allowing users to customize the algorithm for specific problems to achieve better computational performance. In this section, we discuss two variants of our i-C\&CG method that provide additional flexibility for practical use (see \ref{appdx:iCCG_variant}). \blue{We refer readers to \ref{appdx:ROCPC_iCCG_variants} for numerical examples illustrating the potential benefits of these variants. }

The first variant provides additional controls on the trade-off in step 3 by allowing users to impose an exploitation frequency $f^{\mbox{\tiny exploit}}$. Specifically, one can enforce the algorithm to proceed to the exploitation step when $\ell$ does not change for $f^{\mbox{\tiny exploit}}$ iterations, \blue{i.e., when $j-\ell>f^\text{exploit}$}. This mechanism remedies the situation that the valid lower bound information is not exploited for a long time, i.e., $\ell \ll j$. \blue{In such a case, the scenario set is substantially enlarged due to exploration steps, thus a new valid lower bound could potentially be identified by an exploitation step. Therefore, imposing $f^\text{exploit}$ could potentially improve the relative gap convergence rate.
} 

The second variant allows users to impose a time limit $\tau$ for solving the master problem. That is, one can run a solver for the master problem that terminates either when the relative gap of $\varepsilon_{MP}^j$ is reached or when the solution time exceeds $\tau$. Due to this additional source of inexactness, in the exploitation step, the algorithm can increase the time limit by a factor of $\beta>0$, \blue{i.e., $\tau\leftarrow\tau+\beta$}.  As a result, the algorithm establishes an adaptive time limit for solving the master problems that increases with each exploitation step. This variant could be useful to accelerate the lower bound improvement if some intermediate master problems are challenging. \blue{In such situations, the algorithm may spend a significant amount of time to solve a particular master problem. The time limit variant could circumvent this problem, potentially improving the computational performance.}

\section{Numerical Results}\label{sec:num_expt}

In this section, we use a two-stage distributionally robust operating room (OR) scheduling problem recently studied in \cite{Wang_et_al:2019} to compare the performance of the C\&CG and i-C\&CG methods. \blue{In \ref{appdx:additional_results_ROCPCP}, we provide additional computational results on a robust facility location problem.}

\subsection{A distributionally robust operating room scheduling problem (DRORSP)} \label{subsec:DRO_OR_scehduling}
We start by introducing the DRORSP setting as in \cite{Wang_et_al:2019}. Let $I$ be a set of surgeries to schedule and $R$ be a set of ORs. Each surgery $i\in I$ has a random duration $d_i$ where the support of $\db=(d_1,\dots,d_{|I|})^\tp$ is $\Xi=\{\db\in\R^{|I|}\mid \dlb_i\leq d_i\leq\dub_i,\,i\in I\}$. The fixed cost of opening an OR is $\cf$, and a per-unit overtime cost $\cv$ is incurred if an OR operates beyond the working hour $T$. In the DRORSP, given the sets of surgeries $I$ and ORs $R$, the OR manager aims to make the following decisions simultaneously:  (a) decide which OR(s) to open, and (b) assign each surgery to an open OR. The objective is to minimize the fixed cost of opening ORs plus the worst-case expected cost associated with OR overtime. As in \cite{Wang_et_al:2019}, we define the following ambiguity set
\begin{equation} \label{eqn:MAD_ambiguity_set}
   \calP=\big\{\Q\in\calD(\Xi)\mid \E_\Q[d_i]=\mu_i,\, \E_\Q\big[|d_i-\mu_i|\big] \leq \nu_i,\, i\in I\big\},
\end{equation} 
where $\calD(\Xi)$ is the set of probability measures with support $\Xi$. Ambiguity set \eqref{eqn:MAD_ambiguity_set} consists of all probability measures with support $\Xi$ such that, for all $i \in I$, the mean is $\mu_i$ and mean absolute deviation (MAD) is less than $\nu_i$.

For each $r\in R$, we define a binary variable $x_{r}$ that equals $1$ if OR $r$ is open, and is $0$ otherwise. In addition, we define a binary decision variable \blue{$y_{ir}$} that equals $1$ if surgery $i$ is assigned to OR $r$, and is $0$ otherwise. For any $a\in\R$, we write $(a)^+=\max\{a,0\}$. The DRORSP can be stated as:
\begin{subequations}
\begin{align}
\underset{\xb\in\{0,1\}^{|R|},\,\yb\in\{0,1\}^{|I|\times|R|}}{\text{minimize}}\, \quad
&  \sum_{r\in R} \cf x_r + \cv \sup_{\Prob\in\calP} \E_\Prob\Bigg[\sum_{r\in R} \Bigg(\sum_{i\in I} y_{ir}d_i - T \Bigg)^+ \Bigg] \label{eqn:DRORSP_obj}\\
\text{subject to} \hspace{9.5mm} \quad
& y_{ir} \leq x_r,\quad\forall i\in I,\, r\in R,  \label{eqn:DRORSP_con1}\\
& \sum_{r\in R}y_{ir} = 1,\quad\forall i\in I.  \label{eqn:DRORSP_con2}
\end{align} \label{prob:DRORSP}%
\end{subequations}
Objective \eqref{eqn:DRORSP_obj} is a sum of the fixed cost of opening ORs and the worst-case expected overtime cost. Constraint \eqref{eqn:DRORSP_con1} ensures that surgeries are assigned to open ORs only, and constraints \eqref{eqn:DRORSP_con2} require that every surgery is assigned to exactly one OR. 

In \cite{Wang_et_al:2019}, the authors demonstrated the challenges of solving problem \eqref{prob:DRORSP} exactly and developed linear decision rules to approximate solutions to problem instances. For our experiments, we implemented the C\&CG and i-C\&CG methods to solve various instances of problem \eqref{prob:DRORSP}. In \ref{appdx:DRORSP}, we present the detailed derivations of an equivalent reformulation of problem \eqref{prob:DRORSP} and the associated master problem and subproblem, as well as standard symmetry-breaking constraints included in the master problem to break symmetry in the solution space of first-stage decisions.

\subsection{Test Instances and Experimental Setup} \label{subsec:expt_setting}

We use three years of surgery duration data for six different surgery types from \cite{Mannino_et_al:2010} and parameter settings from the literature to generate various DRORSP instances as follows. First, we sample $500$ data points to estimate the mean, MAD, and lower and upper bounds of the surgery durations for each surgery type. In particular, we set the lower and upper bounds as the ($20$th, $80$th) or ($10$th, $90$th) percentiles of the empirical distribution. Second, for each combination of $|I|\in\{20, 21, \dots,25\}$ and $|R|\in\{7,10\}$, we generate the number of surgeries for each type from a multinomial distribution with probability being equal to the estimated surgery type proportion from the data set. Third, we set $T=480$ minutes and consider two sets of weights for the multi-criteria objective function $(\cf,\cv)\in\{(1,1/30),(1,1/120)\}$ as in \cite{Denton_et_al:2010}. Finally, for each combination of $|I|$, $|R|$, percentiles, and $(\cf,\cv)$, we generate and solve $5$ instances for a total of $240$ instances.

For algorithmic parameters, we set the final relative gap $\varepsilon\in\{2\%,5\%\}$ for both the C\&CG and i-C\&CG methods. In the i-C\&CG method, we set the initial master relative gap tolerance $\varepsilon_{MP}$ to $2\%$ and the inexact relative gap $\epst$ to $1.5\%$. In addition, we impose an initial solver time limit of $300$ seconds (s) for solving the master problems. If the time limit is exceeded, then we set $U^j$ as the best objective value found. At each exploitation step, $\varepsilon_{MP}^j$ is decreased by a factor of $\alpha=0.8$ and the time limit is increased by $600$ seconds. We implemented both the C\&CG and i-C\&CG methods with the AMPL modeling language and use CPLEX (version 20.1.0.0) as the solver with its default settings. We conducted all the experiments on a computer with an Intel Xeon Silver processor with a 2.10 GHz CPU and 128 Gb memory.

\subsection{Experimental Results} \label{subsec:expt_results}

We focus our discussion on the cost structure $(\cf,\cv)=(1,1/30)$; the results for $(\cf,\cv)=(1,1/120)$ are similar (see \ref{appdx:additional_results}). Figure \ref{fig:time_cost1} shows the time performance profiles for the C\&CG and i-C\&CG methods under two different lower and upper bound estimates. The curves represent the percentage of instances solved to a relative gap of $\varepsilon=2\%$ (dotted line) or $\varepsilon=5\%$ (solid line) within a given time $t\in[0,7200]$. Figure \ref{fig:time_cost1} clearly illustrates that the i-C\&CG method can solve more instances than the C\&CG method. For example, when we use (20th, 80th) percentiles, the C\&CG method can only solve \blue{around $70\%$} of the instances, but the i-C\&CG method can solve up to $90\%$ of them with $\varepsilon=5\%$ within the $2$-hour time limit. Similarly, when we use (10th, 90th) percentiles, the C\&CG method can solve only $20\%$ of the instances, but the i-C\&CG method can solve at least $50\%$ of them.
\begin{figure}[t]
    \centering
    \includegraphics[scale=0.72]{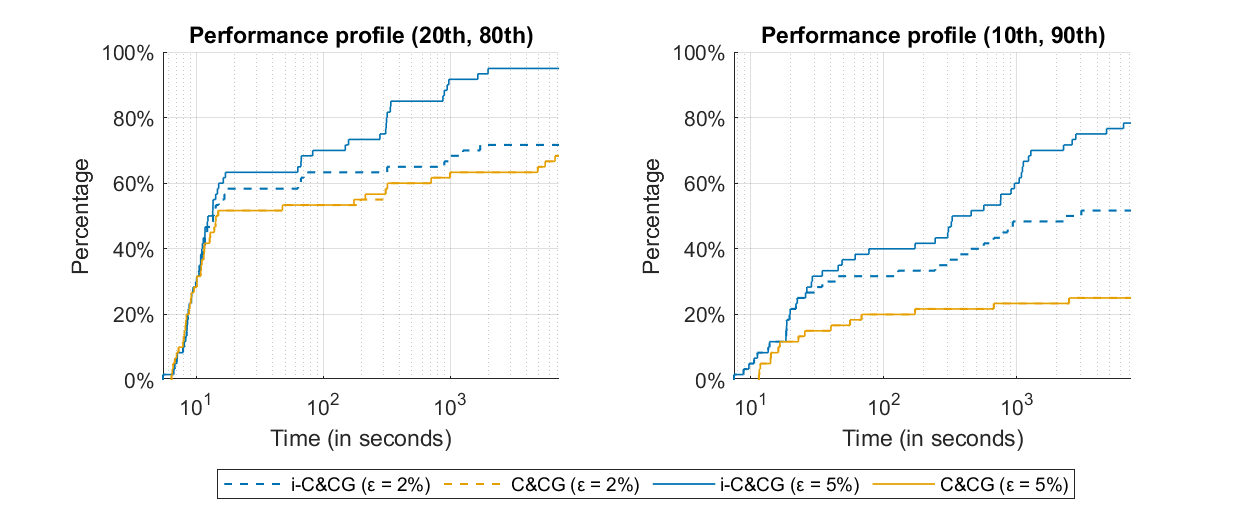}
    \caption{Time performance profile with $(\cf,\cv)=(1,1/30)$ under two different lower and upper bound estimates: left -- (20th, 80th) percentiles; right -- (10th, 90th) percentiles}
    \label{fig:time_cost1}
\end{figure}

Next, we analyze the final relative gap reported from the algorithms for instances that terminate at the $2$-hour time limit. The curves in Figure \ref{fig:gap_cost1} show the proportion of instances solved to a final relative gap less than a certain percentage when $\varepsilon=2\%$. It is clear that the final relative gaps from the i-C\&CG method are significantly less than those from the C\&CG method. For most instances, the final relative gaps from the i-C\&CG method are less than  $10\%$. In contrast,  the final relative gaps from the C\&CG method are greater than $80\%$ for at least $70\%$ of the instances.

We attribute the differences in performance of the two algorithms to the following. Since the master problem of the DRORSP is a challenging MILP, solving the master problem requires significant computational effort. By allowing inexact solutions to challenging master problems, the i-C\&CG method can solve problem instances more efficiently to a small relative gap. In contrast, we observe that the C\&CG method spends a significant amount of time solving some master problems (in early iterations), thus terminating with a large final relative gap for most instances.
\begin{figure}[t]
    \centering
    \includegraphics[scale=0.72]{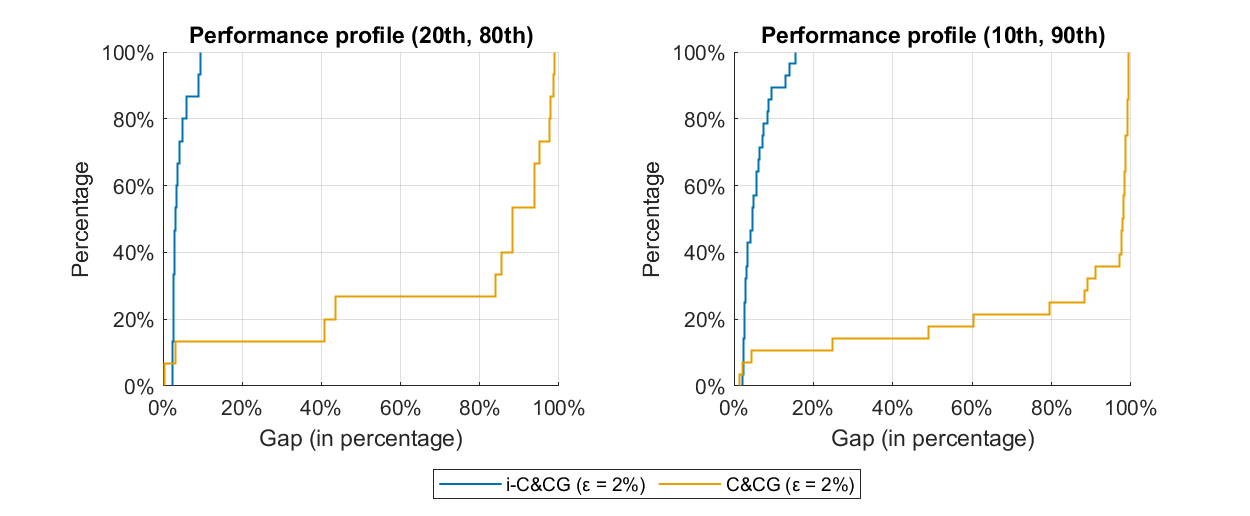}
    \caption{Gap performance profile for instances that both C\&CG and i-C\&CG exceed $2$-hour time limit with $(\cf,\cv)=(1,1/30)$ under two different lower and upper bound estimates: left -- (20th, 80th) percentiles; right -- (10th, 90th) percentiles}
    \label{fig:gap_cost1}
\end{figure}

\vspace{1mm}

\color{black}
\noindent \textbf{Acknowledgment}

\noindent We thank all colleagues who have contributed significantly to the related literature. In addition, we thank the editor and the two  anonymous reviewers for their insightful comments and suggestions. Dr. Karmel S.~Shehadeh dedicates her effort in this paper to every little dreamer in the whole world who has a dream so big and so exciting. Believe in your dreams and do whatever it takes to achieve them--the best is yet to come for you. \vspace{3mm}
\color{black}

\bibliographystyle{elsarticle-harv}
\bibliography{references}

\appendix
\newpage
\let\oldthebibliography=\thebibliography
\renewenvironment{thebibliography}[1]{%
   \oldthebibliography{#1}%
   \setcounter{enumiv}{1}%
}

\begin{center}
    {\Large An inexact column-and-constraint generation method to solve two-stage robust optimization problems (Appendices) \vspace{3mm}\\
    \large Man Yiu Tsang, Karmel S.~Shehadeh, Frank E.~Curtis}
\end{center}

\section{Two-stage Distributionally Robust Optimization Examples} \label{appdx:DRO}

By defining an ambiguity set $\calP$ (a subset of all probability measures with support $\Xi$), the two-stage distributionally robust optimization problem has the form
\begin{equation} \label{prob:2S_dist_robust}
    \min_{\xb\in\calX}\,\,\bigg\{ \cb^\tp \xb + \sup_{\Prob\in\calP} \, \E_{\Prob} \bigg[ \min_{\yb\in\calY(\xb,\xib)} \qb^\tp \yb  \bigg] \bigg\}.
\end{equation}
In the following, we provide two examples that under specific choices of the ambiguity set $\calP$ and the support $\Xi$, problem \eqref{prob:2S_dist_robust} can be reformulated in the form of \eqref{prob:2S_robust}. Thus, one can apply the C\&CG and i-C\&CG methods to solve these models.

\begin{example} \label{eg:mean_support_ambig}
Mean-support ambiguity set $\calP(\mub)=\{\Prob\in\calP(\Xi)\mid \E_\Prob(\xib)=\mub\}$ captures the support $\Xi$ and mean $\mub$ of the random vector $\xib$. Due to its intuitive inputs, this ambiguity set has been employed in various applications; see, e.g., \citepec{Jiang_et_al:2017ec, Shehadeh_Sanci:2021ec}. Under the Slater-type condition \citepec{Xu_et_al:2018ec}, the worst-case expectation in \eqref{prob:2S_dist_robust} is equivalent to its dual:
\begin{equation} \label{eg_eqn:mean_support_dual}
    \min_{\lambda\in\R^l}\,\, \bigg\{\mub^\tp\lambdab + \sup_{\xib\in\Xi}\bigg\{ Q(\xb,\xib)-\xib^\tp\lambdab \bigg\} \bigg\}.
\end{equation}
We remark that the Slater-type condition is weaker than the classical Slater condition; see, e.g., discussions in \citepec{Liu_et_al:2019ec}. The former is commonly used to ensure strong duality in distributionally robust optimization problems with moment-based ambiguity sets; see, e.g., \citepec{Xu_et_al:2018ec}. Hence, problem \eqref{eg_eqn:mean_support_dual} reduces to the form of problem \eqref{prob:2S_robust} with $(\xb,\lambdab)$ as the first-stage decision. 
\end{example}

\begin{example} \label{eg:MAD_ambig}
The mean-absolute-deviation ambiguity set $\calP(\mub,\sigmab)=\{\Prob\in\calP(\Xi)\mid \E_\Prob(\xib)=\mub,\, \E_\Prob|\xib-\mub|\leq \sigmab\}$ captures the support $\Xi$ and mean $\mub$ information, and requires the absolute deviation from mean is no more than $\sigmab$ \citepec{Wang_et_al:2019ec,Zhang_et_al:2017ec}. Again, under the Slater-type condition \citepec{Xu_et_al:2018ec}, we can reformulate the worst-case expectation in \eqref{prob:2S_dist_robust} as
\begin{equation} \label{eg_eqn:MAD_dual}
    \min_{\lambda\in\R^l,\,\rhob\in\R^l}\,\, \bigg\{\mub^\tp\lambdab + \rhob^\tp\sigmab + \sup_{\xib\in\Xi}\bigg\{ Q(\xb,\xib)-\xib^\tp\lambdab-\rhob^\tp |\xib-\mub| \bigg\} \bigg\},
\end{equation}
where the absolute value is computed entry-wisely. We can introduce auxiliary variables to linearize terms in absolute value, leading to the form of problem \eqref{prob:2S_robust}. 
\end{example}

\section{Proof of Proposition \ref{prop:valid_LB_iC&CG}} \label{appdx:proof_valid_LB}

To prove the validity of $L^\ell$ in Proposition \ref{prop:valid_LB_iC&CG}, we first provide Proposition \ref{prop:opt_val_MP_iC&CG}, which addresses the inexactness associated with the additional constraint \eqref{eqn:Inexact_C&CG_Master_con2}.

\begin{prop}   \label{prop:opt_val_MP_iC&CG}
If the value $\Lbar$ in \eqref{eqn:Inexact_C&CG_Master_con2} is greater than the optimal value $\upsilon^*$ of problem \eqref{prob:2S_robust} at some iteration $j$ (i.e., $\Lbar>\upsilon^*$), the optimal value $\upsilon^\star_j$ of \eqref{prob:Inexact_C&CG_Master} is $\Lbar$.
\end{prop}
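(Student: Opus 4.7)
The plan is to establish $\upsilon^\star_j = \Lbar$ by proving both inequalities, with the non-trivial direction being an upper bound established via an explicit construction.

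First, the easy direction. Constraint \eqref{eqn:Inexact_C&CG_Master_con2} explicitly requires $\cb^\tp\xb + \delta \geq \Lbar$ for every feasible $(\xb,\delta)$ of \eqref{prob:Inexact_C&CG_Master}. Therefore, the master objective satisfies $\cb^\tp\xb + \delta \geq \Lbar$ at every feasible point, and in particular $\upsilon^\star_j \geq \Lbar$.

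For the reverse inequality, I would exhibit a feasible point for \eqref{prob:Inexact_C&CG_Master} whose objective equals $\Lbar$. Let $\xb^\star$ be an optimal first-stage solution for problem \eqref{prob:2S_robust}, which exists by the finite-optimal-value assumption. Set $\xb := \xb^\star$ and $\delta := \Lbar - \cb^\tp\xb^\star$. Feasibility of \eqref{eqn:Inexact_C&CG_Master_con2} holds with equality by construction. For constraint \eqref{eqn:Inexact_C&CG_Master_con1}, note that by optimality of $\xb^\star$ we have
\begin{equation*}
\upsilon^\star \;=\; \cb^\tp \xb^\star + \max_{\xib\in\Xi} Q(\xb^\star,\xib) \;\geq\; \cb^\tp\xb^\star + Q(\xb^\star,\xib) \quad \text{for every } \xib\in\Xi,
\end{equation*}
and since $\calS\subseteq\Xi$, the hypothesis $\Lbar > \upsilon^\star$ gives
\begin{equation*}
\delta \;=\; \Lbar - \cb^\tp\xb^\star \;>\; \upsilon^\star - \cb^\tp\xb^\star \;\geq\; Q(\xb^\star,\xib) \quad \text{for every } \xib\in\calS.
\end{equation*}
Hence $(\xb^\star,\,\Lbar-\cb^\tp\xb^\star)$ is feasible for \eqref{prob:Inexact_C&CG_Master} with objective value exactly $\Lbar$, proving $\upsilon^\star_j \leq \Lbar$ and therefore $\upsilon^\star_j = \Lbar$.

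The only subtle point — and the one I would write out most carefully — is the chain connecting the hypothesis $\Lbar > \upsilon^\star$ to the second-stage feasibility requirement \eqref{eqn:Inexact_C&CG_Master_con1}; this uses that $\upsilon^\star$ equals $\cb^\tp\xb^\star$ plus the \emph{worst-case} recourse over all of $\Xi$, which dominates $Q(\xb^\star,\xib)$ for any single $\xib\in\calS$. Everything else is a direct reading of the constraints, so there is no real obstacle beyond keeping track of which side of the inequality $\Lbar$ lies on.
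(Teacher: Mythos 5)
Your proof is correct and follows essentially the same route as the paper's: the lower bound $\upsilon^\star_j \geq \Lbar$ is read off constraint \eqref{eqn:Inexact_C&CG_Master_con2}, and the upper bound comes from exhibiting the same feasible point $(\xb^\star,\,\Lbar-\cb^\tp\xb^\star)$, with feasibility of \eqref{eqn:Inexact_C&CG_Master_con1} verified via $\max_{\xib\in\calS}Q(\xb^\star,\xib)\leq\max_{\xib\in\Xi}Q(\xb^\star,\xib)=\upsilon^\star-\cb^\tp\xb^\star<\Lbar-\cb^\tp\xb^\star$. No gaps; this matches the paper's argument step for step.
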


\begin{proof}
First, note that constraint \eqref{eqn:Inexact_C&CG_Master_con2} implies $\upsilon_{j}^\star \geq \Lbar$. For the opposite direction that $\upsilon_{j}^\star \leq \Lbar$, it suffices to show that there exists a feasible solution with objective $\Lbar$. We claim that $(\xb,\delta)=(\xb^\star,\Lbar-\cb^\tp\xb^\star)$ is feasible to \eqref{prob:Inexact_C&CG_Master} with objective value $\Lbar$, where $\xb^\star$ is an optimal solution to problem \eqref{prob:2S_robust}. To verify our claim, we only need to show that \eqref{eqn:Inexact_C&CG_Master_con1} is satisfied. Indeed, since $\upsilon^\star < \Lbar$ by our assumption, we have
\begin{equation}  \label{pf_eqn:opt_val_MP_iC&CG_1}
 c^\tp x^\star + \max_{\xib\in\Xi} Q(\xb^\star,\xib)=\upsilon^\star < \Lbar.
\end{equation}
Therefore, we obtain the desired inequality
$$\max_{\xib\in\calS} Q(\xb^\star,\xib) \leq \max_{\xib\in\Xi} Q(\xb^\star,\xib) < \Lbar-\cb^\tp\xb^\star=\delta,$$
where the first inequality follows from $\calS\subseteq \Xi$ and the second one follows from \eqref{pf_eqn:opt_val_MP_iC&CG_1}. 
\end{proof}

\begin{proof}[Proof of Proposition \ref{prop:valid_LB_iC&CG}]
Suppose, on the contrary, that $L^\ell$ is not a valid lower bound for $\upsilon^*$, i.e., $L^\ell > \upsilon^*$. Consider the master problem \eqref{prob:Inexact_C&CG_Master} at iteration $\ell$ and its optimal value $\upsilon^\star_\ell$. Therefore, $\Lbar$ in this proof is the right-hand-side value of \eqref{eqn:Inexact_C&CG_Master_con2} when solving this master problem. First, we have $L^\ell> \Lbar$ from the only updating criterion for $\ell$ in step 1.2. Next, consider the following two cases. If $\Lbar > \upsilon^\star$, by Proposition~\ref{prop:opt_val_MP_iC&CG}, we have the inequality $\upsilon^\star_\ell = \Lbar < L^\ell$, contradicting that $L^\ell$ is a lower bound on $\upsilon_\ell^\star$. If $\Lbar \leq \upsilon^\star$, the solution $(\xb,\delta)=(\xb^\star,\max_{\xib\in\Xi} Q(\xb^\star,\xib))$ is feasible to the master problem with objective  $\upsilon^\star$, where $\xb^\star$ is an optimal solution to problem \eqref{prob:2S_robust}. Therefore, this leads to the same contradiction that $\upsilon^\star_{\ell} \leq \upsilon^\star<L^\ell$. This concludes that $L^\ell$ is a valid lower bound. 
\end{proof}

\section{Proof of Corollary \ref{coro:terminate_MIP_gap_iC&CG}} \label{appdx:proof_coro_terminate_MIP_gap_iC&CG}

\begin{proof}
Note that the new updating rule $\Lbar\leftarrow U^j$ guarantees that $\Lbar$ is always a valid lower bound on $\upsilon^\star$. Therefore, we can set $\ell=j$ at each iteration and the desired results follow from the first part of the proof in Proposition \ref{prop:terminate_MIP_gap_iC&CG}.
\end{proof}

\section{Pseudocode for i-C\&CG variants} \label{appdx:iCCG_variant}

We present the pseudo codes for the two variant of i-C\&CG discussed in Section \ref{subsec:inexact_C&CG_variant} in the following two subsections, respectively. For the ease of reading, we highlight additional elements in these two variants, when compared with the general i-C\&CG framework presented in Algorithm \ref{algo:inexact_C&CG}, in orange.

\subsection{Exploitation Frequency Variant}

Algorithm \ref{algo:inexact_C&CG_V1} presents the exploitation frequency variant of the i-C\&CG method. In this variant, we have an additional parameter $f^\text{exploit}$ to control the exploitation frequency. Specifically, in the backtracking routine, if $j-\ell > f^\text{exploit}$ (i.e., exploitation has not been reached at least $f^\text{exploit}$ times), the algorithm proceeds to the exploitation step. In such a case ($\ell \ll j$), the scenario set is substantially enlarged, and thus, it is likely that a new valid lower bound could be found via exploitation.
\IncMargin{1.3em}
\begin{algorithm}[h] 
\setstretch{1.1}
\SetKwInOut{Initialization}{Initialization}
\Initialization{$\Lbar\gets0$, $\Ubar\gets\infty$, $\varepsilon \in [0,1]$, $\epst\in(0,\varepsilon/(1+\varepsilon))$,  $\{\varepsilon_{MP}^j \in (0,1)\}_{j\in\N}$, $\alpha \in (0,1)$, $\calS \gets\emptyset$, $j\gets1$, $\ell\gets0$, \textcolor{orange}{$f^\text{exploit}\in\mathbb{N}$}.} 
\textbf{1. Master problem.} \\
\textbf{\small\hspace{5.3mm}1.1.} Solve the master problem \eqref{prob:Inexact_C&CG_Master} to within a relative optimality gap of $\varepsilon_{MP}^j$. \\ \hspace{5.3mm}Record the best feasible solution $(\xb^j,\delta^j)$ found.  \\
\textbf{\small\hspace{5.3mm}1.2.} Record a lower bound $L^j\geq \Lbar$ and upper bound $U^j= \cb^\tp\xb^j + \delta^j$ of $\upsilon^\star_j$. \\
\hspace{5.3mm}If $L^j> \Lbar$, then set $\ell \leftarrow  j$.  \\
\textbf{\small\hspace{5.3mm}1.3.} Set $\Lbar \leftarrow  U^j$.\\
\textbf{2. Subproblem.} Solve the subproblem \eqref{prob:Exact_C&CG_Subproblem} for fixed $\xb=\xb^j$. \\
\hspace{5.3mm}Record the optimal solution $\xib^\star$ and value $D^j$. \\
\hspace{5.3mm}Set $\Ubar \leftarrow  \min\big\{\Ubar,\, \cb^\tp\xb^j +D^j\big\}$.\\

\textbf{3. Optimality test and backtracking routine.} \\
\hspace{5.3mm}If $(\Ubar-L^\ell)/\Ubar<\varepsilon$, then terminate and return $\xb^j$; otherwise, do the following.
\begin{itemize}
    \item \textbf{Exploitation}: If $(\Ubar-U^j)/\Ubar < \epst$ \textcolor{orange}{or $j-\ell > f^\text{exploit}$}, then set $j\leftarrow \ell$ and $\Lbar\leftarrow L^\ell$. \\
    Set $\varepsilon^j_{MP}\leftarrow \alpha \varepsilon^j_{MP}$ for all $j\geq \ell$ and go back to step 1. \vspace{-3mm}
    \item \textbf{Exploration}: If $(\Ubar-U^j)/\Ubar  \geq \epst$, then go to step 4.  \vspace{-3mm}
\end{itemize}
\textbf{4. Scenario set enlargement.}\\
\hspace{5.3mm}Enlarge the scenario set $\calS \leftarrow \calS\cup\{\xib^\star\}$. \\ \hspace{5.3mm}Update $j \leftarrow j+1$ and go back to step 1.
\BlankLine
\caption{Inexact column-and-constraint (i-C\&CG) method, exploitation frequency variant} \label{algo:inexact_C&CG_V1}
\end{algorithm}\DecMargin{1em}

\subsection{Time Limit Variant}

Algorithm \ref{algo:inexact_C&CG_V2} presents the time limit variant of the i-C\&CG method. In this variant, there are two additional parameters, $\tau$ and $\beta$, to control the time limit of the master problem and the increase for the time limit, respectively. Specifically, in step 1.1, we impose a time limit $\tau$ when solving the master problem. This is useful when the relative gap $\varepsilon_{MP}^j$ is difficult to achieve. To control the inaccuracies due to the imposed time limit, in the exploitation step, the algorithm increases the time limit $\tau$ by $\beta$. 
\IncMargin{1.3em}
\begin{algorithm}[t!] 
\setstretch{1.1}
\SetKwInOut{Initialization}{Initialization}
\Initialization{$\Lbar \gets0$, $\Ubar\gets\infty$, $\varepsilon \in[0,1]$, $\epst\in(0,\varepsilon/(1+\varepsilon))$,  $\{\varepsilon_{MP}^j \in (0,1)\}_{j\in\N}$, $\alpha \in (0,1)$, $\calS\gets\emptyset$, $j\gets1$, $\ell\gets0$, \textcolor{orange}{$\tau > 0$, $\beta>0$}.} 
\textbf{1. Master problem.} \\
\textbf{\small\hspace{5.3mm}1.1.} Solve the master problem \eqref{prob:Inexact_C&CG_Master} to within a relative optimality gap of $\varepsilon_{MP}^j$ \\\hspace{5.3mm}\textcolor{orange}{or terminate if the time limit $\tau$ is exceeded}.\\ \hspace{5.3mm}Record the best feasible solution $(\xb^j,\delta^j)$ found.  \\
\textbf{\small\hspace{5.3mm}1.2.} Record a lower bound $L^j\geq \Lbar$ and upper bound $U^j= \cb^\tp\xb^j + \delta^j$ of $\upsilon^\star_j$. \\ \hspace{5.3mm}If $L^j> \Lbar$, then set $\ell \leftarrow  j$.  \\
\textbf{\small\hspace{5.3mm}1.3.} Set $\Lbar \leftarrow  U^j$.\\
\textbf{2. Subproblem.} Solve the subproblem \eqref{prob:Exact_C&CG_Subproblem} for fixed $\xb=\xb^j$. \\
\hspace{5.3mm}Record the optimal solution $\xib^\star$ and value $D^j$. \\ \hspace{5.3mm}Set $\Ubar \leftarrow  \min\big\{\Ubar,\, \cb^\tp\xb^j +D^j\big\}$.\\

\textbf{3. Optimality test and backtracking routine.} \\
\hspace{5.3mm}If $(\Ubar-L^\ell)/\Ubar<\varepsilon$, then terminate and return $\xb^j$; otherwise, do the following.
\begin{itemize}
    \item \textbf{Exploitation}: If $(\Ubar-U^j)/\Ubar < \epst$, then set $j\leftarrow \ell$ and $\Lbar\leftarrow L^\ell$. \\ Set $\varepsilon^j_{MP}\leftarrow \alpha \varepsilon^j_{MP}$ for all $j\geq \ell$, \textcolor{orange}{set $\tau\leftarrow \tau+\beta$}, and go back to step 1. \vspace{-3mm}
    \item \textbf{Exploration}: If $(\Ubar-U^j)/\Ubar  \geq \epst$, then go to step 4.  \vspace{-3mm}
\end{itemize}
\textbf{4. Scenario set enlargement.}\\
\hspace{5.3mm}Enlarge the scenario set $\calS \leftarrow \calS\cup\{\xib^\star\}$. \\ \hspace{5.3mm}Update $j \leftarrow j+1$ and go back to step 1.
\BlankLine
\caption{Inexact column-and-constraint (i-C\&CG) method, time limit variant} \label{algo:inexact_C&CG_V2}
\end{algorithm}\DecMargin{1em}


\section{Details of the DRORSP} \label{appdx:DRORSP}

\subsection{Master-Subproblem Framework}  \label{appdx:DRORSP_MS_framework}
In this section, we derive the master problem and subproblem for the DRORSP that facilitates the use of the C\&CG and i-C\&CG methods. In view of \eqref{prob:DRORSP}, we define the \blue{second-stage problem} as
\begin{subequations}
\begin{align}
Q(\yb,\db):=\sum_{r\in R}\Bigg(\sum_{i\in I} y_{ir}d_i - T\Bigg)^+ =\underset{\wb}{\text{minimize}}\, \quad
&  \sum_{r\in R} w_r\\
\text{subject to} \quad
& w_r \geq \sum_{i\in I}y_{ir}d_i - T,\quad\forall r\in R, \\
& w_r \geq 0,\quad\forall r\in R.
\end{align} \label{prob:DRORSP_SS}%
\end{subequations}
We first reformulate the inner maximization problem in \eqref{prob:DRORSP} over $\Prob\in\calP$ as defined in \eqref{eqn:MAD_ambiguity_set}. As shown in \citeec{Wang_et_al:2019ec}, this inner maximization problem is equivalent to its dual presented in Proposition \ref{prop:DRORSP_inner_max_dual_reformulation}.

\begin{prop} \label{prop:DRORSP_inner_max_dual_reformulation}
The problem $\sup_{\Q\in\calP} \E_\Q[Q(\yb,\db)]$ with $\calP$ defined in \eqref{eqn:MAD_ambiguity_set} is equivalent to
\begin{subequations}
\begin{align}
\underset{\etab,\,\vphib}{\textup{minimize}}\, \quad
&  \sum_{i\in I} (\mu_i\eta_i + \sigma_i\varphi_i) + \sup_{\db\in\Xi}\bigg\{ Q(\yb,\db)- \sum_{i\in I}\Big( d_i\eta_i + |d_i-\mu_i| \varphi_i \Big) \bigg\} \\
\textup{subject to} \quad
& \varphi_i\geq 0,\quad\forall i\in I.
\end{align} \label{prob:DRORSP_inner_max_dual}%
\end{subequations}  \vspace{-10mm}
\end{prop}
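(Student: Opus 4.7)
The plan is to recognize this as the DRORSP specialization of the generic mean-absolute-deviation dualization given in Example~\ref{eg:MAD_ambig} of~\ref{appdx:DRO}, with $Q(\yb,\db)$ playing the role of the inner recourse and the surgery-duration box $\Xi = \{\db \in \R^{|I|} : \dlb_i \leq d_i \leq \dub_i,\ i \in I\}$ playing the role of the generic support. The underlying argument is standard Lagrangian duality for moment problems over measures, so the derivation is structurally identical, but I would spell it out in the present setting for concreteness.

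First I would view the primal $\sup_{\Q\in\calP}\E_\Q[Q(\yb,\db)]$ as an infinite-dimensional linear program over the cone of nonnegative measures on $\Xi$, with constraints $\int_\Xi d\Q = 1$, $\int_\Xi d_i\, d\Q = \mu_i$ for each $i\in I$, and $\int_\Xi |d_i-\mu_i|\, d\Q \leq \sigma_i$ for each $i\in I$. I would attach Lagrange multipliers $\theta\in\R$, $\eta_i\in\R$, and $\varphi_i\geq 0$ to these three groups, respectively, and then exchange the order of sup-inf. After regrouping, the inner supremum over nonnegative measures is finite only when the integrand $Q(\yb,\db) - \theta - \sum_i(\eta_i d_i + \varphi_i |d_i - \mu_i|)$ is uniformly nonpositive on $\Xi$; the tightest admissible value is $\theta^\star = \sup_{\db\in\Xi}\{Q(\yb,\db) - \sum_i(\eta_i d_i + \varphi_i|d_i-\mu_i|)\}$. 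Substituting $\theta^\star$ back collapses the dual objective to $\sum_i(\mu_i\eta_i + \sigma_i\varphi_i) + \theta^\star$, which matches~\eqref{prob:DRORSP_inner_max_dual} with the constraint $\varphi_i \geq 0$ retained. This chain of manipulations yields weak duality for free.

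The main obstacle is strong duality, since weak duality alone does not equate the two optimal values for a semi-infinite LP. The remedy is the Slater-type condition invoked in~\cite{Xu_et_al:2018} (and referenced for Example~\ref{eg:MAD_ambig}): it suffices to exhibit a probability measure $\Q_0$ on $\Xi$ with $\E_{\Q_0}(d_i)=\mu_i$ for all $i$ and strict feasibility $\E_{\Q_0}|d_i-\mu_i|<\sigma_i$ for all $i$. The Dirac measure at $\mub$ discharges this requirement whenever $\mub$ lies in the interior of $\Xi$ (which holds for the parameter settings in Section~\ref{subsec:expt_setting}) and $\sigma_i>0$ for all $i$, since it has zero absolute deviation. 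With strong duality established, the primal and dual optimal values coincide; attainment of the dual infimum under these hypotheses is standard for moment problems and completes the proof.
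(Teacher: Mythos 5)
Your proposal follows essentially the same route as the paper: both pass to the Lagrangian/semi-infinite dual of the moment problem and then eliminate the free multiplier $\theta$ by observing that minimizing over it pins it to $\sup_{\db\in\Xi}\{Q(\yb,\db)-\sum_{i\in I}(d_i\eta_i+|d_i-\mu_i|\varphi_i)\}$. The only substantive difference is the justification of strong duality: the paper cites the moment-problem duality theorem under compactness of $\Xi$ and continuity of $Q(\yb,\cdot)$ and the moment functions, whereas you construct a Slater point (the Dirac measure at $\mub$), which quietly imports the extra hypotheses $\mub\in\operatorname{int}\Xi$ and $\sigma_i>0$ that are not part of the proposition's statement.
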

\begin{proof}
First, note that $\Xi$ is compact. Moreover, $Q(\xb,\db)$ is a continuous function in $\db$, as well as $\phi^\text{eq}_{i}(\db) := d_i$ and $\phi^\text{ineq}_{i}(\db) := |d_i-\mu_i|$ for all $i\in I$. By strong duality of moment problems (see, e.g., \citeec{Shapiro_et_al:2014ec}), the problem $\sup_{\Q\in\calP} \E_\Q[Q(\yb,\db)]$ is equivalent to
\allowdisplaybreaks
\begin{subequations}
\begin{align}
\underset{\etab,\,\vphib,\,\theta}{\textup{minimize}}\, \quad
&  \sum_{i\in I} (\mu_i\eta_i + \sigma_i\varphi_i) + \theta \\
\textup{subject to} \quad
& \sum_{i\in I}\Big( d_i\eta_i + |d_i-\mu_i| \varphi_i \Big) + \theta \geq Q(\yb,\db) ,\quad\forall \db\in\Xi, \label{pf_eqn:DRORSP_inner_max_dual_reformulation_1} \\
& \varphi_i\geq 0,\quad\forall i\in I.
\end{align}%
\end{subequations}
From \eqref{pf_eqn:DRORSP_inner_max_dual_reformulation_1}, we have
$$\theta \geq Q(\yb,\db)- \sum_{i\in I}\Big( d_i\eta_i + |d_i-\mu_i| \varphi_i \Big),\quad\forall \db\in\Xi. $$
Since $\theta$ is unrestricted and the objective is to minimize $\theta$, this shows the equivalence between \eqref{pf_eqn:DRORSP_inner_max_dual_reformulation_1} and \eqref{prob:DRORSP_inner_max_dual}.
\end{proof}

In view of Proposition \ref{prop:DRORSP_inner_max_dual_reformulation}, the DRORSP \eqref{prob:DRORSP} is equivalent to
\begin{subequations}
\begin{align}
\underset{\xb,\,\yb,\,\etab,\,\vphib,\,\delta}{\text{minimize}}\, \quad
&  \sum_{r\in R} \cf x_r + \cv \sum_{i\in I} (\mu_i\eta_i + \sigma_i\varphi_i) + \cv\delta\\
\text{subject to} \quad
& y_{ir} \leq x_r,\quad\forall i\in I,\, r\in R, \\
& \sum_{r\in R}y_{ir} = 1,\quad\forall i\in I,\\
& \delta \geq Q(\yb,\db) - \sum_{i\in I}\Big( d_i\eta_i + |d_i-\mu_i| \varphi_i \Big),\quad\forall \db\in\Xi, \\
& \varphi_i \geq 0,\quad\forall i\in I, \\
& x_r\in\{0,1\},\,y_{ir}\in\{0,1\},\quad\forall i\in I,\, r\in R.
\end{align}%
\end{subequations}
Therefore, given a subset of scenario $\calS\subset\Xi$, the master problem is given by
\begin{subequations}
\begin{align}
\underset{\xb,\,\yb,\,\etab,\,\vphib,\,\delta,\,\wb}{\text{minimize}}\, \quad
&  \sum_{r\in R} \cf x_r + \cv \sum_{i\in I} (\mu_i\eta_i + \sigma_i\varphi_i) + \cv\delta\\
\text{subject to} \quad
& y_{ir} \leq x_r,\quad\forall i\in I,\, r\in R, \\
& \sum_{r\in R}y_{ir} = 1,\quad\forall i\in I,\\
& \delta \geq \sum_{r\in R} w_r^k - \sum_{i\in I} ( d^k_i\eta_i + |d^k_i-\mu_i|\varphi_i ), \quad\forall k\in\calS, \\
& w_r^k \geq \sum_{i\in I}y_{ir}d^k_i - T,\quad\forall r\in R,\, k\in\calS, \\
& \varphi_i\geq 0,\, w_r^k \geq 0,\quad\forall i\in I,\, r\in R,\, k\in\calS,\\
& x_r\in\{0,1\},\,y_{ir}\in\{0,1\},\quad\forall i\in I,\, r\in R.
\end{align}%
\end{subequations}
Finally, we provide a tractable MILP reformulation of the subproblem in Proposition \ref{prop:DRORSP_subproblem_reformulation}.

\begin{prop} \label{prop:DRORSP_subproblem_reformulation}
Let $\Deltalb_i = \mu_i-\dlb_i$ and $\Deltaub_i=\dub_i-\mu_i$ for all $i\in I$. The subproblem $\sup_{\db\in\Xi}\Big\{ Q(\yb,\db)- \sum_{i\in I}\big( d_i\eta_i + |d_i-\mu_i| \varphi_i \big) \Big\}$ is equivalent to
\allowdisplaybreaks
\begin{subequations}
\begin{align}
\underset{\pib,\,\bb,\,\zetab}{\textup{maximize}}\, \quad
& -T\sum_{r\in R}\pi_r + \sum_{i\in I} \Bigg[\mu_i\Bigg(\sum_{r\in R}\pi_r y_{ir} - \eta_i \Bigg) - \Deltalb_i \Bigg(\sum_{r\in R}\zeta^1_{ir} y_{ir} -\eta_i b^1_i \Bigg) + \Deltaub_i \Bigg(\sum_{r\in R} \zeta^2_{ir} y_{ir} - \eta_i b^2_i \Bigg) \nonumber \\
& \quad -\varphi_i\Big(b^1_i\Deltalb_i + b^2_i \Deltaub_i\Big)  \Bigg]\\
\textup{subject to} \quad
& 0\leq \pi_r\leq 1,\quad\forall r\in R,\\
& \zeta^1_{ir}\geq 0,\,\,\zeta^1_{ir}\geq \pi_r +b^1_i-1,\,\,\zeta^1_{ir}\leq \pi_r,\,\,\zeta^1_{ir}\leq b^1_r,\quad\forall i\in I,\, r\in R, \\
& \zeta^2_{ir}\geq 0,\,\,\zeta^2_{ir}\geq \pi_r +b^2_i-1,\,\,\zeta^2_{ir}\leq \pi_r,\,\,\zeta^2_{ir}\leq b^2_r,\quad\forall i\in I,\, r\in R, \\
& b^1_i + b^2_i \leq 1,\quad\forall i\in I,\\
& b^1_i\in\{0,1\},\, b^2_i\in\{0,1\},\quad\forall i\in I.
\end{align} \label{pf_prob:DRORSP_subproblem_MILP}%
\end{subequations} 
\end{prop}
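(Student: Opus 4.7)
The plan is to derive the reformulation in three sequential steps: dualize the inner recourse problem defining $Q(\yb,\db)$; argue that the worst-case $\db$ is always attained at one of three discrete values per index; and then linearize the resulting bilinear terms via McCormick envelopes. Together with the identity that $|d_i-\mu_i|$ collapses to an affine expression in the chosen binary variables, this reassembles exactly the MILP in \eqref{pf_prob:DRORSP_subproblem_MILP}.

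First, I would replace $Q(\yb,\db)$ with the LP dual of problem \eqref{prob:DRORSP_SS}. Since the defining LP decouples in $r$ and each row has the form $\min\{w_r : w_r\geq \sum_{i\in I} y_{ir}d_i - T,\, w_r\geq 0\}$, its dual is $\max\{\pi_r(\sum_{i\in I} y_{ir}d_i - T) : 0\leq \pi_r\leq 1\}$. Strong LP duality and the fact that $\pib$ does not appear in the remaining terms let me interchange the outer $\sup$ with the inner $\max$, so that the subproblem becomes a joint maximum over $\db\in\Xi$ and $0\leq \pi_r\leq 1$ of
$$-T\sum_{r\in R}\pi_r + \sum_{i\in I}\Big[d_i\Big(\sum_{r\in R}\pi_r y_{ir}-\eta_i\Big)-\varphi_i|d_i-\mu_i|\Big].$$
Next, for fixed $\pib$ the objective is separable across $i$ and each summand is a concave piecewise-linear function of $d_i$ with a single breakpoint at $\mu_i$ (because $\varphi_i\geq 0$). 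Such a function attains its maximum over $[\dlb_i,\dub_i]$ at one of the three points $\{\dlb_i,\mu_i,\dub_i\}$, which I would encode by the substitution $d_i=\mu_i-\Deltalb_i b^1_i+\Deltaub_i b^2_i$ with $b^1_i,b^2_i\in\{0,1\}$ and $b^1_i+b^2_i\leq 1$. Under this substitution the identity $|d_i-\mu_i|=\Deltalb_i b^1_i+\Deltaub_i b^2_i$ holds automatically at every feasible binary point.

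Finally, expanding the objective after substitution produces the cross terms $\pi_r b^1_i$ and $\pi_r b^2_i$, which I would represent exactly by auxiliary variables $\zeta^1_{ir}=\pi_r b^1_i$ and $\zeta^2_{ir}=\pi_r b^2_i$. Because $\pi_r\in[0,1]$ and $b^k_i\in\{0,1\}$, the standard McCormick inequalities $\zeta^k_{ir}\geq 0$, $\zeta^k_{ir}\leq \pi_r$, $\zeta^k_{ir}\leq b^k_i$, and $\zeta^k_{ir}\geq \pi_r+b^k_i-1$ enforce the product at every feasible point. Collecting the coefficients of $\mu_i$, $\Deltalb_i b^1_i$, and $\Deltaub_i b^2_i$ then yields the objective in \eqref{pf_prob:DRORSP_subproblem_MILP} verbatim. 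The main obstacle, and the only non-routine step, is the three-point argument: one must verify that after dualizing $Q$ each $d_i$ enters the objective as a sum of a single linear term and $-\varphi_i|d_i-\mu_i|$, so that the piecewise-linear structure has exactly one breakpoint per index; this is precisely why it is essential to dualize $Q$ first and thereby replace the positive-part operator by a linear expression in $\db$ before optimizing in $\db$. The remaining LP duality, substitution, and McCormick linearization steps are standard.
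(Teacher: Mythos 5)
Your proposal is correct and follows essentially the same route as the paper's proof: dualize the recourse LP to merge the inner minimization into a joint maximization over $(\pib,\db)$, observe that the objective is piecewise linear in each $d_i$ with a single breakpoint at $\mu_i$ so the optimum lies in $\{\dlb_i,\mu_i,\dub_i\}$, substitute $d_i=\mu_i-\Deltalb_i b^1_i+\Deltaub_i b^2_i$, and linearize the bilinear terms $\pi_r b^k_i$ with McCormick inequalities. The only cosmetic difference is that you invoke concavity of each summand to justify the three-point argument, whereas the paper needs only the two-piece piecewise-linear structure; both are valid.
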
 \vspace{-10mm}
\begin{proof}
First, by LP strong duality, we have
\begin{subequations}
\begin{align}
Q(\yb,\db)=\underset{\pib}{\text{maximize}}\, \quad
&  \sum_{r\in R} \pi_r\Bigg(\sum_{i\in I} y_{ir}d_i - T\Bigg)\\
\text{subject to} \quad
& 0\leq \pi_r \leq 1,\quad\forall r\in R.
\end{align} \label{pf_prob:DRORSP_SS_dual}%
\end{subequations}
Therefore, we can reformulate the subproblem as a single maximization problem:
\begin{subequations}
\begin{align}
\underset{\pib,\,\db}{\text{maximize}}\, \quad
& -T\sum_{r\in R}\pi_r + \sum_{i\in I} \Bigg[\Bigg(\sum_{r\in R}\pi_r y_{ir} -\eta_i \Bigg) d_i -\varphi_i|d_i-\mu_i| \Bigg]\\
\text{subject to} \quad
& 0\leq \pi_r\leq 1,\quad\forall r\in R,\\
& \dlb_i \leq d_i \leq \dub_i,\quad\forall i\in I.
\end{align} \label{pf_prob:DRORSP_subproblem}%
\end{subequations}
Since the objective function of \eqref{pf_prob:DRORSP_subproblem} is piecewise linear in $d_i$ with two pieces on $[\dlb_i,\mu_i]$ and $[\mu_i,\dub_i]$, we have that the optimal solution $d_i^\star\in\{\dlb_i,\mu_i,\dub_i\}$. Let $b^1_i$ and $b^2_i$ be two binary variables and let $d_i=\mu_i - b^1_i\Deltalb_i + b^2_i\Deltaub_i$. Then, problem \eqref{pf_prob:DRORSP_subproblem} is equivalent to
\begin{subequations}
\begin{align}
\underset{\pib,\,\bb}{\text{maximize}}\, \quad
& -T\sum_{r\in R}\pi_r + \sum_{i\in I} \Bigg[\mu_i\Bigg(\sum_{r\in R}\pi_r y_{ir} - \eta_i \Bigg) - \Deltalb_i \Bigg(\sum_{r\in R}\pi_r b^1_i y_{ir} -\eta_i b^1_i \Bigg) \nonumber \\
& \quad  + \Deltaub_i \Bigg(\sum_{r\in R} \pi_r b^2_i y_{ir} - \eta_i b^2_i \Bigg)  -\varphi_i\Big(b^1_i\Deltalb_i + b^2_i \Deltaub_i\Big)  \Bigg]\\
\text{subject to} \quad
& 0\leq \pi_r\leq 1,\quad\forall r\in R,\\
& b^1_i + b^2_i \leq 1,\quad\forall i\in I,\\
& b^1_i\in\{0,1\},\, b^2_i\in\{0,1\},\quad\forall i\in I.
\end{align} \label{pf_prob:DRORSP_subproblem2}%
\end{subequations}
Note that \eqref{pf_prob:DRORSP_subproblem2} is non-linear due to the quadratic terms $b^1_i\pi_r$ and $b^2_i\pi_r$ in the objective. Defining $\zeta^1_{ir}=b^1_i\pi_r$ and $\zeta^2_{ir}=b^2_i\pi_r$ and introducing the McCormick inequalities, we can reformulate \eqref{pf_prob:DRORSP_subproblem2} into \eqref{pf_prob:DRORSP_subproblem_MILP}.
\end{proof}

\subsection{Symmetry-Breaking Constraints}  \label{appdx:DRORSP_SBC}

As in \citeec{Wang_et_al:2019ec}, we apply the following symmetry-breaking constraints \citeec{Denton_et_al:2010ec} in the model under the realistic assumption that $|I|\geq |R|$:
\begin{equation} \label{eqn:SBC1}
    \blue{x_r \geq x_{r+1},\quad\forall r\in\{1,\dots, |R|-1\},}
\end{equation}
\begin{equation} \label{eqn:SBC2}
    \sum_{r=1}^i y_{ir} = 1,\quad\forall i\in\{1,\dots, |R|-1\},
\end{equation}
\begin{equation} \label{eqn:SBC3}
    \sum_{r=j}^{\min\{i,|R|\}} y_{ir} \leq \sum_{u=j-1}^{i-1} y_{u,j-1}\quad\forall j\in\{2,\dots,|R|\},\, i\in\{j,\dots,|R|\}.
\end{equation}
\blue{Constraints \eqref{eqn:SBC1} require that ORs with smaller indices are open before those with larger indices. In other words, ORs are open in ascending order of the OR index $r$: OR1 is open before OR2 is open, OR2 is open before OR3, and so on. Constraints \eqref{eqn:SBC2} require that surgery $i$ is assigned to one of the ORs with index $r\in\{1,\dots,i\}$. Finally, constraints \eqref{eqn:SBC3} ensure that if surgery $i$ is assigned to an OR with index $r\in\big\{j,\dots,\min\{i,|R|\}\big\}$, then there exists at least one surgery with index $u\in\{j-1,\dots,i-1\}$ that is assigned to OR with index $j-1$. We refer readers to \citeec{Denton_et_al:2010ec} for detailed explanations and examples. }


\section{Additional Computational Results} \label{appdx:additional_results}

We provide additional computational results under cost structure $(\cf,\cv)=(1,1/120)$. Figure \ref{fig:time_cost2} shows the time performance profile under two different lower and upper bound estimates. Similar to the observations in Section \ref{subsec:expt_results}, while the computational performance between the C\&CG and i-C\&CG methods for easier instances is similar, we observe that the i-C\&CG method is more efficient in solving challenging instances. When we use (20th, 80th) percentiles, the C\&CG method can only solve about $40\%$ of the instances, but the i-C\&CG method can solve more than $80\%$ of them (indeed, all the instances when $\varepsilon=5\%$) within the $2$-hour time limit. When we use (10th, 90th) percentiles, the C\&CG method can solve only less than $10\%$ of the instances while the i-C\&CG method can solve up to $90\%$ with $\varepsilon=5\%$.
\begin{figure}[t]
    \centering
    \includegraphics[scale=0.72]{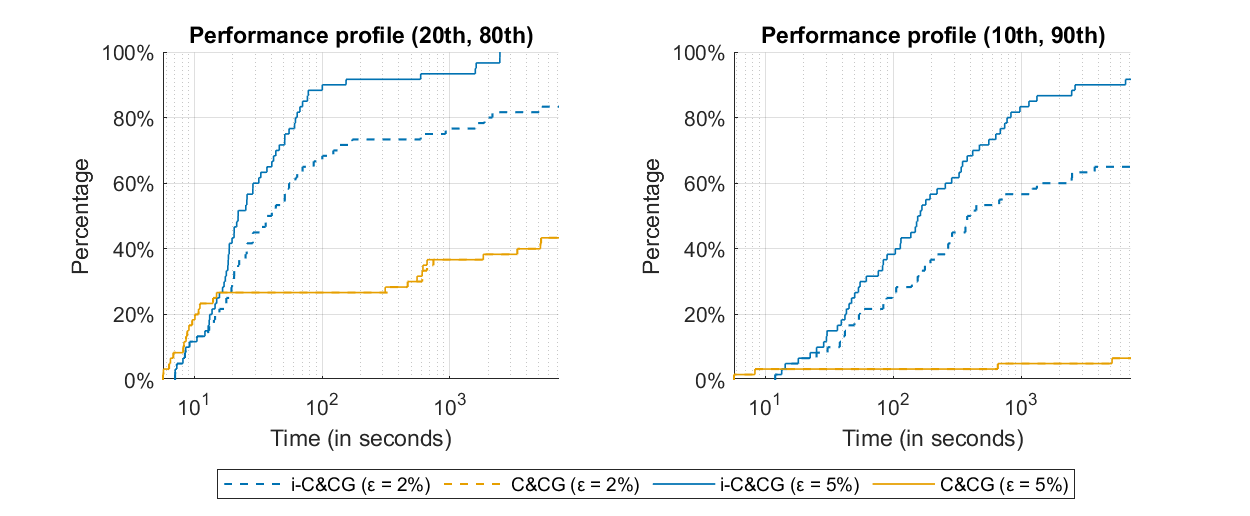}
    \caption{Time performance profile with $(\cf,\cv)=(1,1/120)$ under two different lower and upper bound estimates: left -- (20th, 80th) percentiles; right -- (10th, 90th) percentiles}
    \label{fig:time_cost2}
\end{figure}

Finally, we also compare the final relative gap of instances that both the C\&CG and i-C\&CG methods terminate due to the time limit. Figure \ref{fig:gap_cost2} shows the corresponding results when $(\cf,\cv)=(1,1/120)$ and $\varepsilon=2\%$. Similar to the observations in Section \ref{subsec:expt_results}, the final relative gaps from the i-C\&CG method are smaller than that of the C\&CG method. For instance, when we use (20th, 80th) percentiles, all the final relative gaps are within $10\%$, but those from the C\&CG method are still greater than $20\%$ (and many of them are even greater than $80\%$). We observe similar results when using (10th, 90th) percentiles. These results further conclude that our proposed i-C\&CG method could be efficient when solving challenging instances (i.e., with difficult master problems).
\begin{figure}[t!]
    \centering
    \includegraphics[scale=0.72]{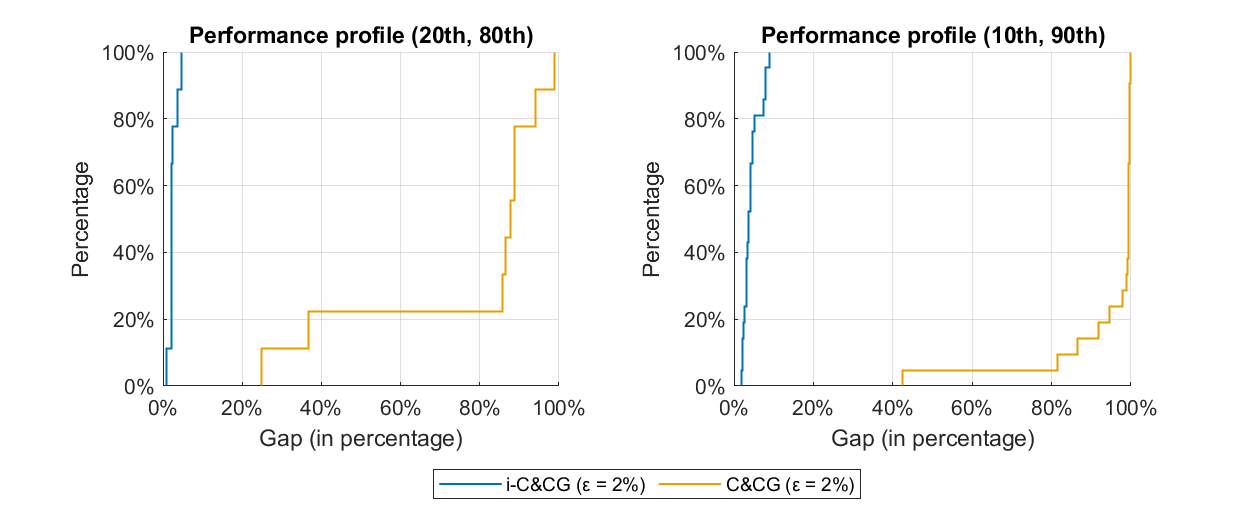}
    \caption{Gap performance profile for instances that both the C\&CG and i-C\&CG methods exceed the $2$-hour time limit with $(\cf,\cv)=(1,1/120)$ under two different lower and upper bound estimates: left -- (20th, 80th) percentiles; right -- (10th, 90th) percentiles}
    \label{fig:gap_cost2}
\end{figure}

\section{Additional Experiments on a Facility Location Problem} \label{appdx:additional_results_ROCPCP}
\blue{
In this section, we present additional computational results using a robust facility location problem. In \ref{appdx:ROCPCP}, we describe the problem setting. In \ref{appdx:ROCPC_MS_framework}, we derive the master problem and subproblem that enables us to employ the C\&CG and i-C\&CG methods. In \ref{appdx:ROCPC_comp_results}, we present numerical results comparing the computational performance of the C\&CG and i-C\&CG methods. Finally, in \ref{appdx:ROCPC_iCCG_variants}, we present numerical examples demonstrating the potential benefits of the proposed i-C\&CG variants in Section \ref{subsec:inexact_C&CG_variant}.
}

\subsection{Robust Capacitated $p$-Center Problem (ROCPCP)} \label{appdx:ROCPCP}

\blue{
We start by introducing the ROCPCP setting. Let $I$ be a the set of customer locations and $J$ be a set of facilities. Each customer location $i\in I$ has a random demand $d_i$. We define demand vector as $\db=(d_1,\dots, d_{|I|})^\tp$. The cost of transporting one unit from facility $j\in J$ to customer location $i\in I$ is $c_{ij}$, and the capacity of facility $j\in J$ is $C_j$. Given the sets of customer locations $I$ and facilities $J$, in the ROCPCP, we want to make the following decisions: (a) decide which facility to open, and (b) assign each customer to an open facility. The number of facilities should be at most $p$. To model demand uncertainty, we define the budgeted uncertainty set as (see \citeec{Zeng_Zhao:2013ec})
\begin{equation} \label{eqn:budget_uncertainty_set}
    \calD=\Bigg\{ \db \,\Bigg|\,d_i = \mu_i + b_i\Delta_i,\, \sum_{i\in I} b_i \leq \tau,\, b_i\in\{0,1\},\, i\in I\Bigg\},
\end{equation}
where $\mu_i$ is the nominal demand, $\Delta_i$ is the maximal deviation from the nominal demand, and $\tau$ is an integer controlling the number of demand deviations from the nominal value.
}

\blue{For each $j\in J$, we define a binary variable $y_j$ that equals $1$ if facility $j$ is open, and is $0$ otherwise. In addition, we define a binary variable $x_{ij}$ that equals $1$ if customer location $i$ is assigned to facility $j$, and is $0$ otherwise. The ROCPCP can now be stated as follows:}

\blue{
\begin{subequations}
\begin{align}
\underset{\yb\in\{0,1\}^{|J|},\,\xb\in\{0,1\}^{|I|\times|J|}}{\text{minimize}}\, \quad
& \sup_{\db\in\calD} \Bigg\{\max_{i\in I} \sum_{j\in J} c_{ij}d_i x_{ij} \Bigg\}\label{eqn:ROCPCP_obj}\\
\text{subject to} \hspace{13mm}
& \sum_{j\in J} x_{ij}=1,\quad\forall i\in I, \label{eqn:ROCPCP_con1}\\
& x_{ij}\leq y_j,\quad\forall i\in I,\, j\in J, \label{eqn:ROCPCP_con2}\\
& \sum_{j\in J} y_j \leq p, \label{eqn:ROCPCP_con3}\\
& \sum_{i\in I} (\mu_i+\Delta_i) x_{ij} \leq C_j ,\quad\forall j\in J, \label{eqn:ROCPCP_con4}
\end{align} \label{prob:ROCPCP}%
\end{subequations}
Objective \eqref{eqn:ROCPCP_obj} is the worst-case maximum transportation cost over different customer location $i\in I$ and demand realization $\db\in\calD$. Constraints \eqref{eqn:ROCPCP_con1} require that each customer location is assigned to exactly one facility, and constraints \eqref{eqn:ROCPCP_con2} ensure that customers are only assigned to open facilities. Constraint \eqref{eqn:ROCPCP_con3} ensures that the number of open facilities is at most $p$.  Constraints \eqref{eqn:ROCPCP_con4} require that an open facility can fulfill the demand of the assigned customer to it in the worst-case scenario, i.e., when demand $d_i$ takes its upper value $\mu_i+\Delta_i$.}


\subsection{Master-Subproblem Framework}  \label{appdx:ROCPC_MS_framework}

\blue{
In view of \eqref{prob:ROCPCP}, given a subset of scenario $\calS\subset\calD$, the master problem is given by
\begin{subequations}
\begin{align}
\underset{\yb,\,\xb,\,\zb,\,\delta}{\text{minimize}}\, \quad
& \delta\\
\text{subject to} \quad\,
& \sum_{j\in J} x_{ij}=1,\quad\forall i\in I, \\
& x_{ij}\leq y_j,\quad\forall i\in I,\, j\in J, \\
& \sum_{j\in J} y_j \leq p, \\
& \sum_{i\in I} (\mu_i+\Delta_i) x_{ij} \leq C_j ,\quad\forall j\in J, \\
& \delta \geq z^k,\quad\forall k\in\calS, \\
& z^k \geq \sum_{j\in J} c_{ij}d_i^k x_{ij},\quad\forall i\in I,\,k\in\calS, \\
& y_j\in\{0,1\},\, x_{ij}\in\{0,1\},\quad\forall i\in I,\, j\in J.
\end{align}%
\end{subequations}
Next, we provide a tractable reformulation of the subproblem. First, we define the second-stage problem as 
\begin{subequations}
\begin{align}
Q(\xb,\db):=\underset{z}{\text{minimize}}\, \quad
& z\\
\text{subject to} \quad
& z \geq \sum_{j\in J} c_{ij} d_i x_{ij} ,\quad\forall i\in I.
\end{align} \label{prob:ROCPCP_SS}%
\end{subequations}
In Proposition \ref{prop:ROCPCP_inner_max_dual_reformulation}, we derive a tractable MILP reformulation of the subproblem $\sup_{\db\in\calD} Q(\xb,\db)$.
}

\blue{
\begin{prop} \label{prop:ROCPCP_inner_max_dual_reformulation}
The subproblem $\sup_{\db\in\calD} Q(\xb,\db)$ with $\calD$ defined in \eqref{eqn:budget_uncertainty_set} is equivalent to
\begin{subequations}
\begin{align}
\underset{\pib,\,\bb,\,\zetab}{\textup{minimize}}\, \quad
&  \sum_{i\in I}\sum_{j\in J} c_{ij}x_{ij}\mu_i\pi_i + \sum_{i\in I}\sum_{j\in J} c_{ij}x_{ij} \Delta_i\zeta_i \\
\textup{subject to} \quad
& \sum_{i\in I}\pi_i=1,\\
& \sum_{i\in I} b_i \leq \tau, \\
& \zeta_i\geq 0,\,\,\zeta_i\geq \pi_i +b_i-1,\,\,\zeta_i\leq \pi_i,\,\,\zeta_i\leq b_i,\quad\forall i\in I, \\
& \pi_i \geq 0,\, b_i\in\{0,1\},\quad\forall i\in I.
\end{align} \label{prob:ROCPCP_subproblem_MILP}%
\end{subequations}  \vspace{-10mm}
\end{prop}
}

\blue{
\begin{proof}
First, by LP strong duality, we have
\begin{subequations}
\begin{align}
Q(\xb,\db)=\underset{\pib}{\text{maximize}}\, \quad
&  \sum_{i\in I} \pi_i \Bigg( \sum_{j\in J} c_{ij}d_ix_{ij} \Bigg)\\
\text{subject to} \quad
& \sum_{i\in I}\pi_i=1, \\
& \pi_i \geq 0,\quad\forall i\in I.
\end{align} \label{pf_prob:ROCPCP_SS_dual}%
\end{subequations}
Therefore, using the definition of the uncertainty set $\calD$ in \eqref{eqn:budget_uncertainty_set}, we can reformulate the subproblem $\sup_{\db\in\calD} Q(\xb,\db)$ as a single maximization problem:
\begin{subequations}
\begin{align}
\underset{\pib,\,\bb}{\text{maximize}}\, \quad
& \sum_{i\in I} \pi_i \Bigg[ \sum_{j\in J} c_{ij}\big(\mu_i + b_i\Delta_i\big)x_{ij} \Bigg]\\
\text{subject to} \quad
& \sum_{i\in I}\pi_i=1,\\
& \sum_{i\in I} b_i \leq \tau, \\
& \pi_i \geq 0,\, b_i\in\{0,1\},\quad\forall i\in I.
\end{align} \label{pf_prob:ROCPCP_subproblem}%
\end{subequations}
Note that \eqref{pf_prob:ROCPCP_subproblem} is non-linear due to the quadratic term $b_i\pi_i$ in the objective. Defining $\zeta_{i}=b_i\pi_i$ and introducing the McCormick inequalities, we can reformulate \eqref{pf_prob:ROCPCP_subproblem} into \eqref{prob:ROCPCP_subproblem_MILP}.
\end{proof}
}

\subsection{Computational Results}  \label{appdx:ROCPC_comp_results}

\blue{
We follow the same parameter settings in \citeec{Zeng_Zhao:2013ec} to generate problem instances. For the uncertainty set parameters, we generate $\mu_i$ from $U[10,500]$ and $\alpha_i$ from $U[0.1,0.5]$, and then compute $\Delta_i=\alpha_i\mu_i$. Here, $U[a,b]$ denotes the uniform distribution on $[a,b]$. We generate the transportation cost $c_{ij}$ from $U[10,500]$ and capacity $C$ from $U[1000,1500]$, where the capacity is larger than those generated in \citeec{Zeng_Zhao:2013ec}.  Note that ROCPCP is a capacitated problem, implying that a generated instance could be infeasible if the capacities are too small. For illustrative purposes, we generate feasible instances. Finally, we set $|I|=100$, $p=|I|/4$ and $\tau\in\{0.2|I|, 0.5|I|, 0.8|I|\}$ (rounded up to the nearest integer) which corresponds to some challenging instances. For each $\tau$, we generate and solve $5$ instances for a total of $15$ instances. All the algorithmic parameter settings are the same as in Section \ref{subsec:expt_setting}.
}

\blue{
Figure \ref{fig:ROCPCP_time_plot} shows the time performance profiles for the C\&CG and i-C\&CG methods. The curves represent the percentage of instances solved to a relative gap of $\varepsilon=2\%$ (dotted line) or $\varepsilon=5\%$ (solid line) within a given time $t\in[1000,7200]$. Similar to our observations in Section \ref{subsec:expt_results} and \ref{appdx:additional_results}, the time profile obtained from the i-C\&CG method dominates the one obtained from the C\&CG method.  In addition, for instances that the i-C\&CG method terminates at the $2$-hour time limit, the final relative gaps are all less than $8.8\%$. However, for instances that the C\&CG method terminates at the $2$-hour time limit, the final relative gaps range from $18.9\%$ to $73.4\%$. These results further demonstrate the computational advantages using the i-C\&CG method over the C-\&CG method when the master problems are challenging. 
}
\begin{figure}[t]
    \centering
    \includegraphics[scale=0.72]{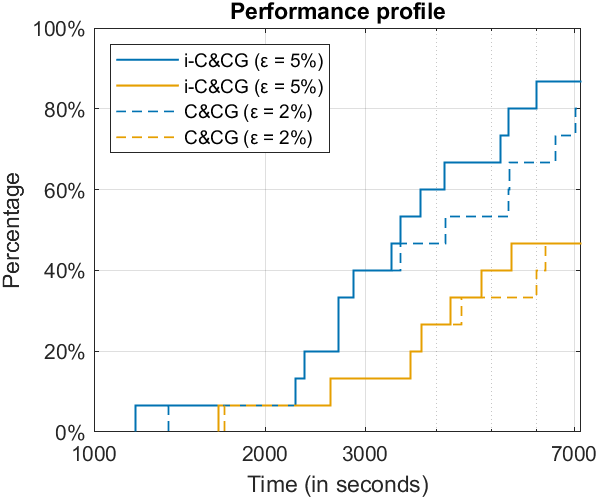}
    \caption{Time performance profile for ROCPCP}
    \label{fig:ROCPCP_time_plot}
\end{figure}

\subsection{Comparing i-C\&CG Variants}  \label{appdx:ROCPC_iCCG_variants}

\blue{
In this section, we provide examples using ROCPCP instances to illustrate the potential benefits of the i-C\&CG variants mentioned in Section \ref{subsec:inexact_C&CG_variant}. We use the same parameter settings described in Section~\ref{subsec:expt_setting}.}

\blue{First, we investigate the benefits of using the exploitation frequency. Figure \ref{fig:ROCPCP_exploit_freq_variant} shows the (actual) relative gap improvement over time of the i-C\&CG method without (the dotted line) the exploitation frequency and with $f^\text{exploit}=10$ (the solid line). The dots on each line indicate the occurrence of an exploitation step. It is clear from this figure that exploitation occurs more frequently when we impose $f^\text{exploit}=10$. In addition, since each exploitation step is followed by a large number of exploration steps (with an increased number of new scenarios in the scenario set), the lower bound significantly improves after the exploitation step. Consequently, the relative gap shrinks faster. In other words, using the i-C\&CG method with exploitation frequency could lead to a faster relative gap improvement rate than the one without exploitation frequency. This example demonstrates the potential benefits of introducing exploitation frequency in the i-C\&CG method}.

%
\begin{figure}[t]
    \centering
    \includegraphics[scale=0.72]{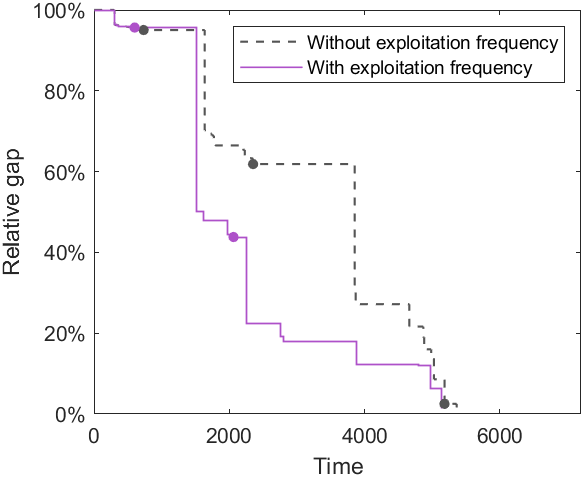}
    \caption{Relative gap improvement with and without exploitation frequency (the dots on the curves indicate the occurrence of exploitation)}
    \label{fig:ROCPCP_exploit_freq_variant}
\end{figure}

\blue{Next, we provide an example showing the benefits of imposing a time limit on the solution time of the master problem.  Figure \ref{fig:ROCPCP_time_limit_variant} shows the (actual) relative gap improvement over time of the i-C\&CG method under the same parameter settings in Section \ref{subsec:expt_setting} with (the solid line) and without (the dotted line) time limit $\tau=300$ seconds and time limit increment $\beta=600$ seconds in the exploitation step. It is clear that imposing a time limit leads to significantly faster convergence.  Moreover, we observe that, without the time limit on master problems, the i-C\&CG method attempts to solve the challenging master problem at the first iteration. On the other hand, the i-C\&CG method with a time limit could circumvent such a problem and improve the relative gap at a faster rate. This example demonstrates the potential benefits of introducing a time limit for solving the master problem in the i-C\&CG method.}


%
\begin{figure}[t]
    \centering
    \includegraphics[scale=0.72]{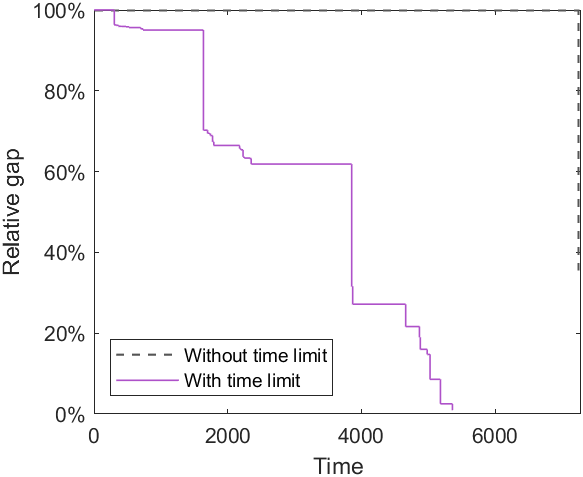}
    \caption{Relative gap improvement with and without time limit}
    \label{fig:ROCPCP_time_limit_variant}
\end{figure}


\newpage
\bibliographystyleec{elsarticle-harv}
\bibliographyec{references}


\end{document}